\newtheorem{thm}{Theorem}[section]
\newtheorem{prop}[thm]{Proposition}
\newtheorem{lem}[thm]{Lemma}
\newtheorem{cor}[thm]{Corollary}
\theoremstyle{definition}
\newtheorem{ex}[thm]{Example}
\newtheorem{defn}[thm]{Definition}
\theoremstyle{remark}
\newtheorem{rem}[thm]{Remark}
\numberwithin{equation}{section}
\title[The Higgs boson for mathematicians]{The Higgs boson for mathematicians. Lecture notes on gauge theory and symmetry breaking}
\author{M.~J.~D.~Hamilton}
\address{      Institute for Geometry and Topology\\
               University of Stuttgart\\
               Pfaffenwaldring 57\\
               70569 Stuttgart\\
               Germany}
\email{mark.hamilton@math.lmu.de}
\date{\today}
\keywords{gauge theory, spontaneous symmetry breaking, Higgs boson}
\begin{document}

\begin{abstract}These notes form part of a lecture course on gauge theory. The material covered is standard in the physics literature, but perhaps less well-known to mathematicians. The purpose of these notes is to make spontaneous symmetry breaking and the Higgs mechanism of mass generation for elementary particles more easily accessible to mathematicians interested in theoretical physics. We treat the general case with an arbitrary compact gauge group $G$ and an arbitrary number of Higgs bosons and explain the situation in the classic case of the electroweak interaction where $G=SU(2)\times U(1)$. Prerequisites are only a basic knowledge of Lie groups and manifolds. No prior knowledge of gauge theory or bundle theory is assumed.
\end{abstract}

\maketitle

\section{Introduction}
Spontaneous symmetry breaking and the Higgs mechanism of mass generation constitute an important part of the Standard Model of elementary particles. Just as gauge theories play a significant role in pure mathematics, like the Donaldson and Seiberg-Witten theories of smooth 4-manifolds, it seems worthwhile to study the Higgs mechanism also from a mathematical and, in particular, geometric point of view.

It is useful to consider the general case of an arbitrary compact Lie group $G$ and an arbitrary number of Higgs bosons from the very beginning, because it will then become clearer what is special in the situation of the electroweak $SU(2)\times U(1)$-theory that appears in the Standard Model of particle physics. Regarding abstraction we chose to stay midway between mathematics and physics: We treat the general case of a compact gauge group $G$, but we want to keep the physical intuition of fields as maps on spacetime with values in a vector space rather than sections in a vector bundle over the spacetime manifold. We therefore essentially treat the case of trivial bundles over an arbitrary spacetime manifold $M$. Many notions can be generalized to arbitrary bundles. Furthermore, since we are mainly interested in the generation of the mass of elementary particles, which appears in the Lagrangian in terms of order two, we do not describe the interaction between the Higgs boson and other particles, corresponding to terms of order three and higher in the fields.

Our main references are: For Sections \ref{sect gauge theories}, \ref{sect massive boson chiral fermion}, \ref{section electroweak} and \ref{sect Yukawa} the lecture notes \cite{Louis} and the book \cite{Mosel}, for Section \ref{sect compact lie groups} the lecture notes \cite{Ziller} and for Sections \ref{sect higgs field}, \ref{sect unitary gauge} and \ref{sect mass gen} the book \cite{Bleecker}. Good references with background on the Standard Model and symmetry breaking are \cite{Robinson} and \cite{Ryder} (to name but two). For the general definition of Yukawa couplings in Section \ref{sect Yukawa} we did not find a suitable reference. The definition here is a (provisional) generalization of the example appearing in the electroweak gauge theory. 

We use the Einstein summation convention throughout and always assume that gauge groups are compact, connected Lie groups.

\section{Gauge Theories}\label{sect gauge theories}

Many interesting theories in physics, like the Standard Model of elementary particles, are gauge theories. The idea of gauge theories is that {\em symmetry generates interaction}. What does that mean?

Gauge theories are certain {\bf field theories}; in particular, matter is described by fields. These field theories have a certain symmetry, called {\bf gauge symmetry}. (One could argue that the major step is actually to describe matter by fields and to collect several such fields into multiplets; see below. Once this is done, the introduction of gauge symmetry is quite natural.) Like every symmetry, gauge symmetry consists of two parts:
\begin{itemize}
\item Action or transformation
\item Invariance
\end{itemize}
\subsection{Transformation}
In the case of gauge symmetry the action consists of {\em local gauge transformations}. Suppose $M$ is our spacetime manifold and 
\begin{equation*}
\psi_i\colon M\longrightarrow S\quad\text{for $i=1,\ldots,n$},
\end{equation*}
are matter fields defined on $M$. Here $S$ denotes a (usually complex) vector space. The fields $\psi_i$ have a certain behaviour under Lorentz transformations, for example, they could be scalars, spinors, etc.\footnote{We assume that $M$ has a Riemannian or Lorentzian metric if we want to define spinors or if we contract spacetime indices, but some parts of the theory of gauge symmetry and symmetry breaking do not involve any metric on the manifold $M$.}

We now collect these matter fields into what could be called a  {\bf multiplet} $\Psi$, i.e.~a vector of fields:
\begin{equation*}
\Psi=\left(\begin{array}{c} \psi_1 \\ \vdots \\ \psi_n\end{array}\right).
\end{equation*} 
Mathematically a multiplet is a field
\begin{equation*}
\Psi\colon M\longrightarrow S\otimes V,
\end{equation*}
where $V$ is a complex vector space of dimension $n$. The space $S\otimes V$ is also called {\bf twisted}, in particular in the case of spinors.

We also have a compact, connected Lie group $G$, called the {\bf gauge group}, that acts as a linear representation on the multiplet space:
\begin{equation*}
G\times V\longrightarrow V,\quad(g,v)\longmapsto g\cdot v.
\end{equation*} 
The Lie group $G$ then also acts on the space $S\otimes V$ (trivially on the first factor). The action of $G$ therefore mixes the components $\psi_i$ of the multiplet $\Psi$ according to the representation on $V$, in the sense that under a group element $g\in G$ every component $\psi_i$ of the multiplet gets mapped to some linear combination of $\psi_1,\ldots,\psi_n$. 
\begin{defn}
A {\bf (local) gauge transformation} is a map 
\begin{equation*}
\sigma\colon M\longrightarrow G
\end{equation*} 
of the spacetime manifold $M$ into the Lie group $G$ that acts in every point $x\in M$ on the multiplet $\Psi(x)$ via the representation on $V$: 
\begin{equation*}
\Psi\longmapsto \sigma\cdot\Psi,\quad (\sigma\cdot\Psi)(x)=\sigma(x)\cdot\Psi(x).
\end{equation*}
The word "{}local"{} indicates that the group element $\sigma(x)\in G$ is allowed to depend on the point $x$ in spacetime.
\end{defn}
The set of all gauge transformations forms an infinite dimensional group 
\begin{equation*}
\mathcal{G}=C^\infty(M,G)
\end{equation*}
of smooth maps from spacetime $M$ into the Lie group $G$. Gauge transformations thus result in an action of $\mathcal{G}$ on the (infinite dimensional) vector space
\begin{equation*}
C^\infty(M,S\otimes V)
\end{equation*}
of all possible fields.

\subsection{Invariance}
Every field theory has field equations, i.e.~certain PDEs, that describe the evolution of the field in spacetime. The invariance in the case of gauge theories means that {\em the field equations are invariant under gauge transformations}: If $\Psi$ is a solution to the field equations, then $\sigma\cdot\Psi$ should also be a solution of the equations for every local gauge transformation $\sigma\colon M\rightarrow G$.

Instead of invariance of the field equations one often demands that the {\bf Lagrangian} $\mathcal{L}$ of the field theory, from which the field equations are derived (as Euler-Lagrange equations), is invariant under gauge transformations. In this case the field equations will be gauge invariant as well: If the Lagrangian $\mathcal{L}$ is gauge invariant and a multiplet field $\Psi$ is a critical point of the {\bf action functional}
\begin{equation*}
S[\Psi]=\int\mathcal{L}[\Psi]\,dx,
\end{equation*}
then $\sigma\cdot\Psi$ is a critical point for every gauge transformation $\sigma\colon M\rightarrow G$.

Invariance of the Lagrangian is stronger than invariance of the field equations, because the latter only means invariance of the set of critical points of the action. A gauge invariant Lagrangian is relevant in particular in quantum field theories where the full action (and not just its critical points) appears in path integrals.

If we demand gauge invariance of the field equations (or Lagrangian) we are almost inevitably led to the introduction of a gauge field $A_\mu$ that has values in the Lie algebra $\mathfrak{g}$ of $G$. The reason is that in general a PDE will be invariant under gauge transformations only if the partial derivatives of the field in the directions of spacetime are {\bf covariant derivatives}: If $\nabla_\mu$ is the covariant derivative of the multiplet $\Psi$ in the direction $e_\mu$, we demand that 
\begin{equation*}
\nabla_\mu(\sigma\cdot\Psi)=\sigma\cdot(\nabla_\mu\Psi)\quad\forall\Psi\in\mathcal{C},
\end{equation*}
i.e. 
\begin{equation*}
\sigma\nabla_\mu \sigma^{-1}=\nabla_\mu
\end{equation*} 
for all gauge transformations $\sigma\colon M\rightarrow G$ (here the inverse $\sigma^{-1}(x)$ means taking the inverse of the group element $\sigma(x)\in G$). The standard derivative $\partial_\mu$ does not have this behaviour because there is a second term due to the Leibniz rule ($\sigma$ is a function on $M$). To a covariant derivative $\nabla_\mu$ the gauge transformation $\sigma$ looks like a constant.
 
The simplest way to construct a covariant derivative is to write
\begin{equation*}
\nabla_\mu = \nabla_\mu^A= \partial_\mu+A_\mu,
\end{equation*}
where the {\bf gauge field} $A$ is a 1-form on $M$ with values in the Lie algebra $\mathfrak{g}$. The set of all gauge fields is $\Omega^1(M,\mathfrak{g})$. If we insert a basis vector field $e_\mu$ on the spacetime manifold $M$into $A$, we get a field
\begin{equation*}
A_\mu\colon M\longrightarrow\mathfrak{g}.
\end{equation*}
Every representation of a Lie group induces a representation of its Lie algebra, hence we get an action of $A_\mu$ on the multiplet space $V$ and thus an action on the multiplets $\Psi\colon M\rightarrow S\otimes V$. The action of the covariant derivative on multiplets
\begin{equation*}
(\nabla^A_\mu\Psi)(x)=(\partial_\mu\Psi)(x)+A_\mu(x)\cdot\Psi(x)
\end{equation*}
is given by the standard derivative of each component of $\Psi$ in the point $x\in M$ plus the action of the Lie algebra element $A_\mu(x)$ on the multiplet $\Psi(x)$.

We need to find the correct behaviour of the gauge field under gauge transformations: This can be written down most easily in the case that $G$ is a group of matrices, so that we can differentiate in the standard way a gauge transformation in a spacetime direction (there is a formula involving the adjoint representation of the group $G$ and the Maurer-Cartan form in the general case, see \cite[p.~79]{Baum} and \cite[p.~303]{Naber}). 
\begin{lem}\label{lem gauge transform gauge field}
If we demand that under a local gauge transformation $\sigma$ we have 
\begin{equation*}
A_\mu\longmapsto A_\mu'=\sigma A_\mu \sigma^{-1}-(\partial_\mu\sigma)\sigma^{-1},
\end{equation*}
then we have 
\begin{equation*}
\nabla_\mu^{A'}=\sigma\nabla_\mu^A \sigma^{-1},
\end{equation*}
so that $\nabla_\mu^A$ is a covariant derivative. 
\end{lem}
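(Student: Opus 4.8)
The plan is to verify the identity by a direct computation, applying both sides to an arbitrary multiplet $\Psi\colon M\to S\otimes V$ and comparing. Since by assumption $G$ is a matrix group, the gauge transformation $\sigma$, the field $A_\mu$, and its derivative $\partial_\mu\sigma$ can all be manipulated as matrix-valued functions on $M$, and the representation action is just matrix multiplication; this lets me use the ordinary Leibniz rule for products throughout.

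First I would unwind the right-hand side. Writing $\nabla_\mu^A=\partial_\mu+A_\mu$, I compute $\sigma\nabla_\mu^A(\sigma^{-1}\Psi)$ in three stages: form $\sigma^{-1}\Psi$, apply $\nabla_\mu^A$, then premultiply by $\sigma$. Applying the Leibniz rule to $\partial_\mu(\sigma^{-1}\Psi)=(\partial_\mu\sigma^{-1})\Psi+\sigma^{-1}\partial_\mu\Psi$ and then multiplying by $\sigma$ produces three terms: the plain derivative $\partial_\mu\Psi$, the conjugated term $\sigma A_\mu\sigma^{-1}\Psi$, and the spurious term $\sigma(\partial_\mu\sigma^{-1})\Psi$ arising from differentiating $\sigma^{-1}$.

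The only substantive step, and the one that makes the inhomogeneous term in the transformation law appear, is to rewrite $\sigma(\partial_\mu\sigma^{-1})$. Differentiating the identity $\sigma\sigma^{-1}=\mathrm{id}$ with $\partial_\mu$ and using the Leibniz rule gives $(\partial_\mu\sigma)\sigma^{-1}+\sigma(\partial_\mu\sigma^{-1})=0$, hence $\sigma(\partial_\mu\sigma^{-1})=-(\partial_\mu\sigma)\sigma^{-1}$. Substituting this turns the three collected terms into $\partial_\mu\Psi+\bigl(\sigma A_\mu\sigma^{-1}-(\partial_\mu\sigma)\sigma^{-1}\bigr)\Psi$, which is exactly $(\partial_\mu+A_\mu')\Psi=\nabla_\mu^{A'}\Psi$ with $A_\mu'$ the claimed transformed field. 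Since $\Psi$ was arbitrary, the operator identity $\nabla_\mu^{A'}=\sigma\nabla_\mu^A\sigma^{-1}$ follows.

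I do not expect any genuine obstacle here: the computation is short and purely algebraic. The one conceptual point worth flagging is that the transformation law for $A_\mu$ has been reverse-engineered precisely so that its inhomogeneous piece $-(\partial_\mu\sigma)\sigma^{-1}$ cancels the unwanted derivative-of-$\sigma$ contribution forced by the Leibniz rule; this is exactly the failure of covariance that prevents the naive derivative $\partial_\mu$ from transforming correctly, as already noted before the statement.
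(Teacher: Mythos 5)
Your proof is correct and follows essentially the same route as the paper's: a direct Leibniz-rule computation verifying the operator identity, the only cosmetic difference being that you expand $\sigma\nabla_\mu^A(\sigma^{-1}\Psi)$ (which requires the extra observation $\sigma(\partial_\mu\sigma^{-1})=-(\partial_\mu\sigma)\sigma^{-1}$ from differentiating $\sigma\sigma^{-1}=\mathrm{id}$), whereas the paper expands $\nabla_\mu^{A'}(\sigma\cdot\Psi)$ directly and watches the inhomogeneous term cancel the Leibniz contribution. Both computations establish the same identity and are equally valid.
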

\begin{proof}
We calculate
\begin{align*}
\nabla_\mu^{A'}(\sigma\cdot\Psi)&=(\partial_\mu+A'_\mu)(\sigma\cdot\Psi)\\
&=(\partial_\mu\sigma)\cdot\Psi+\sigma\cdot(\partial_\mu\Psi)+\sigma A_\mu\sigma^{-1}\sigma\cdot\Psi-(\partial_\mu\sigma)\sigma^{-1}\sigma\cdot\Psi\\
&=\sigma\cdot\nabla^A_\mu\Psi.
\end{align*}
We see that the gauge field $A_\mu$ with its transformation behaviour exactly cancels the term from the Leibniz rule applied to $\partial_\mu$.
\end{proof}
To ensure gauge symmetry of the field equations for the matter multiplet $\Psi$ we therefore have to introduce another field $A_\mu$ that compensates or absorbs with its behaviour under gauge transformations the transformation of the field $\Psi$. Only the combined transformations of the gauge field and the matter multiplet leave the field equations invariant.

Note that if the space of multiplets is $S\otimes V$, where $S$ is a space of spinors, we can define a {\bf twisted Dirac operator} that acts on the multiplets via 
\begin{equation*}
D^A\Psi=i\gamma^\mu\nabla^A_\mu\Psi=i\gamma^\mu(\partial_\mu+A_\mu)\Psi,
\end{equation*}
where the gamma matrices $\gamma^\mu$ act on all of the spinor components $\psi_i$ separately (mathematically, the action of $i\gamma_\mu$ on a spinor is the Clifford multiplication of the basis vector $e_\mu$ with the spinor). This Dirac operator appears in the Lagrangian and the field equations for fermion multiplets $\Psi$.

It is sometimes useful to choose a basis $\alpha_1,\ldots,\alpha_r$ of the Lie algebra $\mathfrak{g}$ (here $r$ is the dimension of $G$). Then we can write
\begin{equation*}
A_\mu=\sum_{i=1}^rA_\mu^i\alpha_i.
\end{equation*} 
The basis elements $\alpha_i$ are called {\bf gauge symmetry generators} and the components $A_\mu^i\colon M\rightarrow\mathbb{R}$ are called {\bf gauge boson fields}.

\subsection{Interactions}
Since the covariant derivative of the matter field $\Psi$ involves the field $A_\mu$, the field equations will now become a certain coupled system of PDEs for the matter multiplets $\Psi$ and the gauge field $A_\mu$. The existence of a coupled, non-linear system of equations means that there is an {\bf interaction} between the matter fields and the gauge field. 

In general, non-linear terms in field equations correspond to interactions (terms of higher than quadratic order in the Lagrangian). One can introduce direct interactions between particles of the same field $\phi$ by considering terms like $\phi^3, \phi^4$, etc.~in the Lagrangian. In gauge theories the interactions between particles of the field $\Psi$ happen via the gauge field $A_\mu$. We want to define the relevant Lagrangians in the case of a gauge theory. 

We first choose a {\bf Hermitian scalar product} on the complex vector space $V$ such that the action of the gauge group $G$ on $V$ is {\bf unitary} (this is always possible since $G$ is compact). Without loss of generality we can assume that $V=\mathbb{C}^n$ and the scalar product is given by
\begin{equation*}
\langle v,w\rangle=v^\dagger w,
\end{equation*}
where
\begin{equation*}
v^\dagger=(v_1^*,\ldots,v_n^*),\quad w=\left(\begin{array}{c}w_1\\ \vdots \\ w_n \end{array}\right).
\end{equation*}
\begin{defn}\label{defn Klein Gordon Lag}
If the components $\psi_i$ are scalar fields so that $S=\mathbb{C}$, then we define the {\bf Klein-Gordon Lagrangian} by
\begin{equation*}
\mathcal{L}=(\nabla^{A\mu}\Psi)^\dagger(\nabla^A_\mu\Psi)-m^2\Psi^\dagger\Psi,
\end{equation*}
where $m$ is the mass of the field.
\end{defn}
If the components $\psi_i$ are spinor fields so that $S$ is some complex spinor space, we first set
\begin{equation*}
\bar{\psi}_i=\psi_i^\dagger\gamma^0
\end{equation*}
and
\begin{equation*}
\overline{\Psi}=(\bar{\psi}_1,\ldots,\bar{\psi}_n).
\end{equation*}
Note that $\bar{\psi}_i\psi_i$ and $\overline{\Psi}\Psi$ are Lorentz scalars. 
\begin{defn}\label{defn Dirac Lag}
The {\bf Dirac Lagrangian} for a spinor field is
\begin{equation*}
\mathcal{L}=\overline{\Psi}D^A\Psi-m\overline{\Psi}{\Psi}.
\end{equation*}
\end{defn}
We also have to introduce another set of dynamic field equations for the gauge field $A_\mu$ itself (in the Lagrangian and hence the field equations for $\Psi$, the gauge field appears without derivatives). This involves the {\bf field strength} (or {\bf curvature})
\begin{equation*}
F_{\mu\nu}=F^A_{\mu\nu}=\partial_\mu A_\nu-\partial_\nu A_\mu+[A_\mu,A_\nu].
\end{equation*} 
The field strength $F$ is a 2-form with values in $\mathfrak{g}$, i.e.~an element in $\Omega^2(M,\mathfrak{g})$. The field strength transforms under (matrix) gauge transformations $\sigma$ as 
\begin{equation*}
F^A_{\mu\nu}\longmapsto F^{A'}_{\mu\nu}=\sigma F^A_{\mu\nu}\sigma^{-1},
\end{equation*}
(in general the transformation is given by the adjoint action of the Lie group $G$). In a basis $\alpha_1,\ldots,\alpha_r$ for the Lie algebra we can write
\begin{equation*}
F_{\mu\nu}=\sum_{i=1}^rF_{\mu\nu}^i\alpha_i.
\end{equation*} 
Since $G$ is compact, it has a bi-invariant Riemannian metric that corresponds to an $\mathrm{Ad}$-invariant positive definite scalar product on the Lie algebra $\mathfrak{g}$. 
\begin{defn}\label{defn YM Lag}
Choosing an $\mathrm{Ad}$-invariant positive definite scalar product $k$ on $\mathfrak{g}$ and an associated orthonormal basis $\alpha_i$, then the {\bf Yang-Mills Lagrangian} is given by
\begin{equation*}
\mathcal{L}=-\frac{1}{4}k(F^{\mu\nu},F_{\mu\nu})=-\frac{1}{4}F^{\mu\nu}_iF_{\mu\nu}^i,
\end{equation*}
where summation over Lie algebra indices $i$ is understood.
\end{defn}
If the group $G$ is non-abelian, there are non-linear terms in the field equations derived from the Yang-Mills Lagrangian. This means that there is a direct interaction between gauge bosons. 
\begin{rem}
One can easily check that each of the three Lagrangians defined above is invariant under gauge transformations.
\end{rem}
The total, gauge invariant Lagrangian for a matter field and a gauge field is the sum of the Klein-Gordon or Dirac Lagrangian and the Yang-Mills Lagrangian.

\subsection{Moduli space}
Mathematically we have in a gauge theory an action of the group $\mathcal{G}$ of gauge transformations on the infinite dimensional space 
\begin{equation*}
\mathcal{C}=C^\infty(M,S\otimes V)\times\Omega^1(M,\mathfrak{g})
\end{equation*}
of pairs of multiplet fields and gauge fields (configurations):
\begin{align*}
\mathcal{G}\times\mathcal{C}&\longrightarrow\mathcal{C}\\
\sigma\cdot(\Psi,A_\mu)&=(\sigma\cdot\Psi,\sigma A_\mu \sigma^{-1}-(\partial_\mu\sigma)\sigma^{-1}).
\end{align*}
Gauge invariance means that the action of $\mathcal{G}$ preserves the space of solutions $\mathcal{S}\subset\mathcal{C}$ to the (coupled) field equations for $\Psi$ and $A_\mu$. The quotient space $\mathcal{S}/\mathcal{G}$ could be called the {\bf moduli space} of the field equations (see e.g.~\cite{Morgan} for the case of the moduli space of the gauge invariant Seiberg-Witten equations).

\subsection{Summary}
The steps in the construction of a gauge theory are therefore:
\begin{itemize}
\item Collect the matter fields into a multiplet $\Psi$ on which a compact Lie group $G$ acts.
\item Demand gauge invariance of the field equations under local gauge transformations.
\item Introduce the gauge field $A_\mu$ to get a covariant derivative in order to formulate gauge invariant field equations.
\item The field equations become a coupled, non-linear system of equations for $\Psi$ and $A_\mu$. This means that there is an interaction between the matter fields and the gauge fields and thus, via the gauge field, between matter fields. If the group $G$ is non-abelian there is also a direct interaction between the gauge fields, because of the commutator in the definition of the field strength $F^A_{\mu\nu}$.
\end{itemize}
\begin{ex}
As an example we consider {\bf QCD}, the gauge theory that describes the interaction between {\bf quarks}. Each {\bf flavour} $f$ of quarks $(u, d, c, s, t, b)$ is described by a triplet $q_f$ of complex 4-component Dirac spinors $q_f^r,q_f^g,q_f^b$ on spacetime $M=\mathbb{R}^4$ of the form 
\begin{equation*}
q_f=\left(\begin{array}{c} q_f^r \\ \ q_f^g \\ q_f^b\end{array}\right).
\end{equation*}
The three components of $q_f$ correspond to the three {\bf colours} that every flavour of quarks can have. A triplet $q_f$ is therefore an element of $S\otimes V$, where $S$ denotes the vector space of 4-spinors and $V=\mathbb{C}^3$ is a 3-dimensional complex vector space. The group $SU(3)$ acts via the standard representation on $V$ and thus on the triplets $q_f$. Since the group $SU(3)$ is non-abelian there is an interaction between the gauge bosons, that are called {\bf gluons}.
\end{ex}

\section{Compact Lie groups}\label{sect compact lie groups}

We recall some facts about Lie groups and Lie algebras. For more details see, for example, \cite{Ziller}.

\subsection{$\mathrm{Ad}$-invariant scalar products on Lie algebras}

A Lie algebra $\mathfrak{g}$ is called {\bf simple} if the dimension of $\mathfrak{g}$ is greater than $1$ and $\mathfrak{g}$ has no non-trivial ideals. A Lie algebra $\mathfrak{g}$ is called {\bf compact} if it is the Lie algebra of a compact Lie group $G$.

\begin{thm} Let $\mathfrak{g}$ be a compact Lie algebra. Then $\mathfrak{g}$ is the direct sum of an abelian ideal and compact simple ideals.
\end{thm}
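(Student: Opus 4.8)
\emph{Proof proposal.} The plan is to build everything on the one feature that singles out compact Lie algebras: the existence of an $\mathrm{Ad}$-invariant positive definite scalar product $k$ on $\mathfrak{g}$, which is available precisely because $\mathfrak{g}=\mathrm{Lie}(G)$ for a compact group $G$ (average any inner product over $G$ against the Haar measure; this is the same $k$ introduced before Definition~\ref{defn YM Lag}). Differentiating the invariance relation $k(\mathrm{Ad}_g x,\mathrm{Ad}_g y)=k(x,y)$ at $g=e$ along $g=\exp(tz)$ yields the infinitesimal statement that each $\mathrm{ad}_z$ is skew-symmetric for $k$, that is $k([z,x],y)=-k(x,[z,y])$ for all $x,y,z\in\mathfrak{g}$. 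This single identity is the engine of the argument.

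First I would use $k$ to make the adjoint action completely reducible on ideals. If $\mathfrak{h}\subseteq\mathfrak{g}$ is any ideal, then its $k$-orthogonal complement $\mathfrak{h}^{\perp}$ is again an ideal and $\mathfrak{g}=\mathfrak{h}\oplus\mathfrak{h}^{\perp}$ with $[\mathfrak{h},\mathfrak{h}^{\perp}]=0$. Indeed, for $y\in\mathfrak{h}^{\perp}$, $x\in\mathfrak{g}$ and $h\in\mathfrak{h}$, skew-symmetry gives $k([x,y],h)=-k(y,[x,h])=0$, since $[x,h]\in\mathfrak{h}$; hence $[x,y]\in\mathfrak{h}^{\perp}$. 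The cross bracket vanishes because $[\mathfrak{h},\mathfrak{h}^{\perp}]\subseteq\mathfrak{h}\cap\mathfrak{h}^{\perp}=0$, where positive definiteness of $k$ is exactly what forces the intersection to be trivial. A small but crucial bookkeeping point follows: in such a $k$-orthogonal sum of ideals, every ideal of a summand is automatically an ideal of all of $\mathfrak{g}$, because brackets with the complementary summand vanish.

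With this in hand I would induct on $\dim\mathfrak{g}$. Choose a minimal nonzero ideal $\mathfrak{m}$ of $\mathfrak{g}$ and split $\mathfrak{g}=\mathfrak{m}\oplus\mathfrak{m}^{\perp}$ as above (if $\mathfrak{m}=\mathfrak{g}$ the complement is zero and the recursion stops). The restriction of $k$ to $\mathfrak{m}^{\perp}$ is still $\mathrm{Ad}$-invariant and positive definite, so the induction hypothesis decomposes the strictly smaller algebra $\mathfrak{m}^{\perp}$ into an abelian ideal and simple ideals, each of which is also an ideal of $\mathfrak{g}$ by the bookkeeping point. To identify $\mathfrak{m}$ itself, note that by the same bookkeeping point the ideals of $\mathfrak{m}$ are exactly the ideals of $\mathfrak{g}$ contained in $\mathfrak{m}$, so minimality forces $\mathfrak{m}$ to have no nontrivial ideals as a Lie algebra. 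Hence, by the definition of simplicity used here, $\mathfrak{m}$ is simple if $\dim\mathfrak{m}>1$, while if $\dim\mathfrak{m}=1$ it is one-dimensional and therefore abelian.

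Finally I would reassemble the output: the one-dimensional summands produced by the recursion have pairwise vanishing brackets (being orthogonal ideal summands), so their sum is a single abelian ideal, and the remaining summands are simple; each of the latter inherits an $\mathrm{Ad}$-invariant positive definite form and is thus itself compact, giving the \emph{compact} simple ideals in the statement. I expect the main obstacle to be not any single computation but the careful verification that ideals transfer correctly between a direct summand and the whole algebra, together with the dichotomy for minimal ideals; the positive definiteness of $k$ is the hypothesis doing the real work, since it rules out the degenerate intersections that would obstruct the orthogonal splitting (in contrast to the Killing form, which is only negative semidefinite here and degenerates on the abelian part).
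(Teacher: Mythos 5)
Your argument is correct, and in fact the paper offers no proof to compare it against: this theorem is stated in Section~\ref{sect compact lie groups} as a recalled fact, with the reader referred to \cite{Ziller} for details. Your route is the standard one, and it is the natural one in the context of these notes, since it runs entirely on the $\mathrm{Ad}$-invariant positive definite scalar product that the paper already introduces for the Yang--Mills Lagrangian: skew-symmetry of each $\mathrm{ad}_z$ makes the $k$-orthogonal complement of an ideal again an ideal with vanishing cross bracket, ideals then transfer between summands and the whole algebra, and induction on dimension over minimal ideals produces the decomposition. All of these steps check out, including the dichotomy for a minimal ideal $\mathfrak{m}$ (simple if $\dim\mathfrak{m}>1$, abelian if $\dim\mathfrak{m}=1$) and the regrouping of the one-dimensional pieces into a single abelian ideal. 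The only point you assert rather than prove is the very last one: that each simple summand is \emph{compact} in the paper's sense, i.e.\ is the Lie algebra of a compact Lie group. Inheriting an $\mathrm{Ad}$-invariant positive definite form gives you immediately that the Killing form of each simple summand is negative definite (every $\mathrm{ad}_x$ is skew-symmetric, hence has purely imaginary spectrum, and a simple algebra has trivial centre), but passing from that to compactness of an integrating group requires Weyl's theorem on the compactness of semisimple groups with negative definite Killing form (or, equivalently, compactness of the adjoint group $\mathrm{Aut}(\mathfrak{g}_i)^0\subset O(\mathfrak{g}_i,k)$ together with finiteness of the fundamental group). A one-line appeal to that result would close the gap; everything else is complete.
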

Since ideals in the Lie algebra correspond to normal subgroups, this means that up to a finite quotient any compact, connected Lie group $G$ is of the form
\begin{equation*}
G=U(1)\times\cdots\times U(1)\times G_1\times\ldots\times G_s,
\end{equation*}
where $G_1,\ldots,G_s$ are compact simple Lie groups.
\begin{thm} Every compact simple Lie algebra $\mathfrak{g}$ has up to a positive constant a unique $\mathrm{Ad}$-invariant positive definite scalar product. 
\end{thm}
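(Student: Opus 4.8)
The plan is to treat existence and uniqueness separately, with essentially all of the work in the uniqueness statement. Existence has in effect already appeared: since $\mathfrak{g}$ is the Lie algebra of a compact Lie group $G$, averaging an arbitrary inner product over $G$ against the normalized Haar measure produces an $\mathrm{Ad}$-invariant positive definite scalar product, exactly the bi-invariant metric mentioned before Definition~\ref{defn YM Lag}. So I would only need to prove that any two such scalar products $k_1$ and $k_2$ differ by a positive multiplicative constant.

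To compare them, I would introduce the operator $P\colon\mathfrak{g}\to\mathfrak{g}$ defined by $k_2(X,Y)=k_1(PX,Y)$ for all $X,Y\in\mathfrak{g}$; this is well defined and unique because $k_1$ is non-degenerate. Three properties of $P$ drive the argument. First, since $k_1$ and $k_2$ are symmetric, $P$ is \emph{self-adjoint} with respect to $k_1$, and since $k_2$ is positive definite $P$ is \emph{positive}, i.e.\ $k_1(PX,X)=k_2(X,X)>0$ for $X\neq0$; in particular all eigenvalues of $P$ are real and strictly positive. Second, $P$ commutes with the adjoint action: using $\mathrm{Ad}$-invariance of $k_2$ to move $\mathrm{Ad}_g$ into the second slot and then $\mathrm{Ad}$-invariance of $k_1$ to move it back, one gets $k_1(P\,\mathrm{Ad}_gX-\mathrm{Ad}_gPX,\,Y)=0$ for all $Y$, whence $P\,\mathrm{Ad}_g=\mathrm{Ad}_g\,P$ for every $g\in G$.

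The remaining input is that simplicity of $\mathfrak{g}$ makes the adjoint representation of $G$ on $\mathfrak{g}$ \emph{irreducible}. Indeed, a subspace of $\mathfrak{g}$ invariant under all $\mathrm{Ad}_g$ is, by differentiating $\mathrm{Ad}_{\exp tZ}$ at $t=0$, invariant under every $\mathrm{ad}_Z$ and hence an ideal (here connectedness of $G$ is used); by simplicity the only ideals are $0$ and $\mathfrak{g}$. Now I would finish by a Schur-type step: pick an eigenvalue $\lambda>0$ of $P$; because $P$ commutes with $\mathrm{Ad}_g$, its eigenspace $\ker(P-\lambda\,\mathrm{id})$ is a nonzero $\mathrm{Ad}$-invariant subspace, so by irreducibility it is all of $\mathfrak{g}$, forcing $P=\lambda\,\mathrm{id}$ and therefore $k_2=\lambda k_1$.

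I expect the genuinely delicate point to be this last step rather than the bookkeeping. Over $\mathbb{R}$ one cannot simply invoke Schur's lemma to conclude that an intertwiner of an irreducible representation is a scalar, since the commutant of a real irreducible representation may be $\mathbb{R}$, $\mathbb{C}$, or $\mathbb{H}$. The way around this is exactly to exploit that $P$ is $k_1$-self-adjoint: self-adjointness guarantees a \emph{real} eigenvalue, and it is the $\mathrm{Ad}$-invariance of this real eigenspace, combined with irreducibility, that yields $P=\lambda\,\mathrm{id}$ without any case distinction on the type of the representation.
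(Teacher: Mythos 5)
Your proof is correct and follows essentially the same route as the paper, which only sketches the argument: existence by averaging over the compact group, uniqueness via irreducibility of the adjoint representation (an $\mathrm{Ad}$-invariant subspace is an ideal) together with a Schur-type argument. Your write-up is a careful expansion of that sketch, and your observation that one should use $k_1$-self-adjointness of the comparison operator $P$ to obtain a real eigenvalue --- rather than naively invoking Schur's lemma over $\mathbb{R}$, where the commutant could be $\mathbb{C}$ or $\mathbb{H}$ --- is exactly the point the paper glosses over.
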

The existence follows because (by averaging) the associated compact Lie group $G$ has a bi-invariant Riemannian metric. Uniqueness follows by Schur's Lemma, because the adjoint action of a simple Lie algebra is irreducible (an $\mathrm{Ad}$-invariant subspace of $\mathfrak{g}$ is an ideal). We get:
\begin{cor}
Let $G$ be a compact, connected Lie group of the form
\begin{equation*}
G=U(1)\times\cdots\times U(1)\times G_1\times\ldots\times G_s,
\end{equation*}
(up to a finite quotient) and $k$ an $\mathrm{Ad}_G$-invariant positive definite scalar product on the Lie algebra $\mathfrak{g}$. Then $k$ is the orthogonal direct sum of a positive definite scalar product $k_0$ on $u(1)\oplus\ldots\oplus u(1)$ and $\mathrm{Ad}_{G_i}$-invariant positive definite scalar products $k_i$ on the Lie algebras $\mathfrak{g}_i$. These scalar products (relative to some fixed scalar products) are determined by real constants, called {\bf coupling constants} in physics.
\end{cor}
In gauge theory we have to choose an $\mathrm{Ad}$-invariant scalar product on the Lie algebra $\mathfrak{g}$ because only then the Yang-Mills Lagrangian in Definition \ref{defn YM Lag} will be gauge invariant. {\em The Yang-Mills Lagrangian thus involves (in our notation implicitly) the choice of coupling constants.}

\subsection{Conventions} We compare the conventions used in physics and mathematics for scalar products and coupling constants. We fix a compact Lie algebra $\mathfrak{g}$ which is either simple or isomorphic to $u(1)$. Let $k_0$ be a fixed $\mathrm{Ad}$-invariant positive definite inner product on $\mathfrak{g}$ and $\beta_1,\ldots,\beta_r$ an associated orthonormal basis. Let $g$ be the coupling constant.
\begin{enumerate}
\item In {\bf mathematics} we choose the scalar product
\begin{equation*}
k=\frac{1}{g}k_0
\end{equation*}
with orthonormal basis
\begin{equation*}
\alpha_i=g\beta_i,\quad i=1,\ldots,r.
\end{equation*}
We expand the gauge field $A_\mu$ with values in $\mathfrak{g}$ as
\begin{equation*}
A_\mu=\sum_{l=1}^rA_\mu^l\alpha_l.
\end{equation*}
The covariant derivative is
\begin{equation*}
\nabla_\mu^A=\partial_\mu+A_\mu.
\end{equation*}
The curvature is
\begin{equation*}
F_{\mu\nu}=\partial_\mu A_\nu-\partial_\nu A_\mu+[A_\mu,A_\nu],\quad F_{\mu\nu}=\sum_{l=1}^rF_{\mu\nu}^l\alpha_l.
\end{equation*}
The Yang-Mills Lagrangian is
\begin{equation*}
\mathcal{L}=-\frac{1}{4}k(F^{\mu\nu},F_{\mu\nu})=-\frac{1}{4}F_{\mu\nu}^lF^{\mu\nu}_l.
\end{equation*}
\item In {\bf physics} we choose the scalar product $k_0$ and the orthonormal basis $\beta_1,\ldots,\beta_r$. We expand the gauge field $B_\mu$ with values in $i\mathfrak{g}\subset\mathfrak{g}\otimes\mathbb{C}$ as
\begin{equation*}
B_\mu=\sum_{l=1}^rB_\mu^l\beta_l.
\end{equation*}
There are two conventions for the covariant derivative:
\begin{equation*}
\nabla_\mu^B=\partial_\mu\pm igB_\mu.
\end{equation*}
The curvature is
\begin{equation*}
G_{\mu\nu}=\partial_\mu B_\nu-\partial_\nu B_\mu\pm ig[B_\mu,B_\nu],\quad G_{\mu\nu}=\sum_{l=1}^rG_{\mu\nu}^l\beta_l.
\end{equation*}
The Yang-Mills Lagrangian is
\begin{equation*}
\mathcal{L}=-\frac{1}{4}k'_0(G^{\mu\nu},G_{\mu\nu})=-\frac{1}{4}G_{\mu\nu}^lG^{\mu\nu}_l
\end{equation*}
where $k'_0$ denotes the Hermitian inner product on $\mathfrak{g}\otimes\mathbb{C}$ associated to $k_0$.
\item The correspondence between both sides is given by mapping
\begin{equation*}
A_\mu\rightarrow \pm igB_\mu,\quad F_{\mu\nu}\rightarrow \pm igG_{\mu\nu}.
\end{equation*}
If the representation of the Lie group $G$ on the multiplet space $V$ is unitary so that the action of an element of the Lie algebra $\mathfrak{g}$ is skew-Hermitian, the field $B_\mu$ will then act as a Hermitian operator. We have $\nabla^A_\mu=\nabla^B_\mu$ and
\begin{equation*}
A_\mu^l=\pm iB_\mu^l,\quad F_{\mu\nu}^l=\pm iG_{\mu\nu}^l.
\end{equation*}
\end{enumerate}

\section{Massive vector bosons and chiral fermions}\label{sect massive boson chiral fermion}

It is clearly observed in experiments that there are elementary particles in nature that have a mass. We have seen in Definitions \ref{defn Klein Gordon Lag} and \ref{defn Dirac Lag} ways how to formulate a gauge invariant Lagrangian for massive particles. However, there are certain situations where a mass cannot be introduced in such a direct way into a gauge theory without spoiling gauge invariance. It is known that these situations, that we want to describe now, occur in nature. The Higgs field is a solution to this problem because it opens up the possibility of introducing a mass for elementary particles in an indirect and gauge invariant way.

\subsection{Massive vector bosons}
The Yang-Mills Lagrangian
\begin{equation*}
\mathcal{L}=-\frac{1}{4}F^{\mu\nu}_iF_{\mu\nu}^i,
\end{equation*}
describes a massless gauge boson. It is observed in experiments that certain gauge bosons have a mass. To introduce a mass directly means to add a term of the form
\begin{equation*}
\frac{1}{2}m^2A^\mu_iA_\mu^i
\end{equation*}
to the Yang-Mills Lagrangian. However, this term is clearly not invariant under the gauge transformations in Lemma \ref{lem gauge transform gauge field}. We will later see in Theorem \ref{thm combined Lag simply} that such a term can be introduced in a gauge invariant way because of the interaction between the gauge field and the Higgs field.

\subsection{Chiral fermions}
Recall (see e.g.~\cite{Morgan}) that if the dimension of the manifold $M$ is $2m$, then the dimension of the target space $S$ of complex Dirac spinors on $M$ is $2^m$. Under the spin group (Lorentz transformations on spinors) this space splits into two irreducible representations $S_L$ and $S_R$, called left-handed and right-handed Weyl spinors, of complex dimension $2^{m-1}$. A Dirac multiplet
\begin{equation*}
\Psi\in S\otimes V
\end{equation*}
thus decomposes into
\begin{equation*}
\Psi=\left(\begin{array}{c}\Psi_L \\ \Psi_R  \end{array}\right)\in (S_L\otimes V)\oplus (S_R\otimes V).
\end{equation*}
We restrict to dimension $2m=4$. We have 
\begin{equation*}
\overline{\Psi}=\Psi^\dagger\gamma^0=\left(\overline{\Psi}_R,\overline{\Psi}_L\right)=\left(\Psi^\dagger_R,\Psi^\dagger_L\right)
\end{equation*}
and, since multiplication with $i\gamma^\mu$ (Clifford multiplication with a basis vector $e_\mu$) turns a left-handed into a right-handed spinor and vice versa, the part
\begin{equation*}
\overline{\Psi}D^A\Psi
\end{equation*}
of the Dirac Lagrangian splits as
\begin{equation*}
\overline{\Psi}_Li\gamma^\mu(\partial_\mu+A_\mu)\Psi_L+\overline{\Psi}_Ri\gamma^\mu(\partial_\mu+A_\mu)\Psi_R.
\end{equation*}
This term will stay gauge invariant even if the corresponding unitary representations of the gauge group $G$ on $V$ are chosen {\em independently} for left-handed and right-handed spinors. In fact, we can choose different multiplet spaces $V_L$ and $V_R$ that twist the left-handed and right-handed spinors, so that the multiplet field takes value in
\begin{equation*}
(S_L\otimes V_L)\oplus (S_R\otimes V_R).
\end{equation*}
It is known from experiments that a realistic gauge theory of elementary particles must be {\bf chiral} in this sense. The twisted Dirac operator $D^A$ is thus a first order differential operator from
\begin{equation*}
C^\infty(M,(S_L\otimes V_L)\oplus (S_R\otimes V_R))
\end{equation*}
into
\begin{equation*}
C^\infty(M,(S_R\otimes V_L)\oplus (S_L\otimes V_R)).
\end{equation*}
In the case of vector bundles this is precisely the situation studied in the {\bf Atiyah-Singer index theorem} for Dirac operators, which from a physics point of view is a theorem about chiral multiplets of fermions on even dimensional manifolds; for more details see \cite{Berline}. 

The mass term $m\overline{\Psi}\Psi$ in the Dirac Lagrangian, however, is of the form
\begin{equation*}
m\left(\overline{\Psi}_L\Psi_R+\overline{\Psi}_R\Psi_L\right)=2m\mathrm{Re}\left(\overline{\Psi}_L\Psi_R\right).
\end{equation*}
This term clearly does not stay invariant if the left-handed and right-handed representations on $V$ are not identical (the term as such is not defined if $V_L\neq V_R$). We consider a slightly more general situation:
\begin{defn}Let $V_R$ and $V_L$ be unitary representations of a Lie group $G$. We define a {\bf mass form} as a $G$-invariant form
\begin{equation*}
\kappa\colon V_L\times V_R\longrightarrow \mathbb{C}
\end{equation*}
which is complex antilinear in the first argument and complex linear in the second.
\end{defn}
We then have:
\begin{thm}
Suppose that $V_L$ and $V_R$ are irreducible, unitary, non-isomorphic representations of $G$. Then every mass form is identically zero.
\end{thm}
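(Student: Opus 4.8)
The plan is to turn the invariant sesquilinear mass form $\kappa$ into an intertwiner between the representations $V_R$ and $V_L$ and then invoke Schur's Lemma, exactly as in the uniqueness argument for $\mathrm{Ad}$-invariant scalar products above. Since $G$ is compact, $V_L$ carries a $G$-invariant Hermitian inner product $\langle\cdot,\cdot\rangle_L$, antilinear in the first and linear in the second slot. For fixed $v_R\in V_R$ the assignment $v_L\mapsto\kappa(v_L,v_R)$ is an antilinear functional on $V_L$, so by the Riesz representation theorem there is a unique vector $T v_R\in V_L$ with $\kappa(v_L,v_R)=\langle v_L,T v_R\rangle_L$ for all $v_L\in V_L$. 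The bookkeeping of antilinearity is what makes this work: $\kappa$ and $\langle\cdot,\cdot\rangle_L$ are both antilinear in the $v_L$-slot, and since $\kappa$ is linear in $v_R$ while $\langle\cdot,\cdot\rangle_L$ is linear in its second slot, nondegeneracy forces the resulting map $T\colon V_R\to V_L$ to be complex \emph{linear} (rather than antilinear).

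Next I would verify that $T$ is $G$-equivariant. Writing out $G$-invariance of $\kappa$ gives $\langle g v_L,T(g v_R)\rangle_L=\kappa(g v_L,g v_R)=\kappa(v_L,v_R)=\langle v_L,T v_R\rangle_L$. Here is the one place unitarity is essential: because $G$ acts unitarily on $V_L$ we may move the group element to the other slot, $\langle g v_L,T(g v_R)\rangle_L=\langle v_L,g^{-1}T(g v_R)\rangle_L$. Comparing the two expressions for all $v_L$ and using nondegeneracy of $\langle\cdot,\cdot\rangle_L$ yields $g^{-1}T(g v_R)=T v_R$, i.e.\ $T(g\cdot v_R)=g\cdot T v_R$ for all $g\in G$ and $v_R\in V_R$. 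Thus $T$ is a morphism of $G$-representations from $V_R$ to $V_L$.

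Finally, Schur's Lemma finishes the argument: since $V_L$ and $V_R$ are irreducible and non-isomorphic, the only $G$-equivariant linear map $T\colon V_R\to V_L$ is $T=0$. Hence $\kappa(v_L,v_R)=\langle v_L,0\rangle_L=0$ for all $v_L,v_R$, so $\kappa\equiv 0$.

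I do not expect a genuine obstacle here; the proof is short. The only subtle point to get right is the antilinearity accounting in the first step, which must be arranged so that $T$ comes out complex linear (so that Schur's Lemma in its standard form applies to an honest intertwiner), together with the correct use of unitarity in the second step to transport the $G$-action across the inner product. Had $V_L$ and $V_R$ been isomorphic, $T$ could be a nonzero multiple of an isomorphism and $\kappa$ need not vanish, which is precisely the chiral mass term one wants to rule out.
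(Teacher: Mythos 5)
Your proof is correct and is essentially the paper's argument: both convert the invariant sesquilinear form $\kappa$ into a complex linear $G$-equivariant map $V_R\to V_L$ and apply Schur's Lemma. The only cosmetic difference is that you invoke the Riesz representation theorem directly, whereas the paper factors the same step through the conjugate dual space $\overline{V}_L^*$ and the equivariant isomorphism $V_L\cong\overline{V}_L^*$, $v_L\mapsto\langle\cdot\,,v_L\rangle$ --- which is exactly the Riesz map.
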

\begin{proof} We define $\overline{V}_L^*$ as
\begin{equation*}
\overline{V}_L^*=\{\alpha\colon V_L\rightarrow\mathbb{C}\mid \alpha\text{ is $\mathbb{C}$-antilinear}\}.
\end{equation*}
This complex vector space has a $G$-representation defined by
\begin{equation*}
(g\cdot\alpha)(v_L)=\alpha\left(g^{-1}\cdot v_L\right)
\end{equation*}
for $g\in G$, $v_L\in V_L$. The map
\begin{equation*}
V_L\longrightarrow \overline{V}_L^*,\quad v_L\longmapsto \langle\cdot\,,v_L\rangle,
\end{equation*}
where $\langle\cdot\,,\cdot\rangle$ is the Hermitian form on $V_L$, defines a complex linear $G$-equivariant isomorphism.

Suppose a mass form $\kappa\neq 0$ exists. Then
\begin{equation*}
V_R\longrightarrow \overline{V}_L^*,\quad v_R\longmapsto \kappa(\cdot,v_R)
\end{equation*}
is a complex linear $G$-equivariant map. In total we get a complex linear $G$-equivariant map
\begin{equation*}
V_R\longrightarrow V_L
\end{equation*}
which is non-zero, because $\kappa\neq 0$. By Schur's Lemma this map has to be an isomorphism of the representations $V_R$ and $V_L$ (because the kernel and image of the map are $G$-invariant), contradicting our assumption.
\end{proof}
We will see in Section \ref{sect Yukawa} how a mass for fermions in a chiral gauge theory can be introduced in a gauge invariant way via Yukawa couplings of the fermions to the Higgs field, involving certain {\em trilinear} forms.

\section{The Higgs field}\label{sect higgs field}
We want to consider gauge theories in which the {\bf symmetry is spontaneously broken}, leading to the existence of one or several Higgs bosons. For example, the electroweak interaction in the Standard Model is a spontaneously broken gauge theory.

We assume that we have a gauge theory with a compact, connected Lie group $G$. To describe symmetry breaking we need a {\bf Higgs field} $\Phi\colon M\rightarrow E$. Here $E=\mathbb{C}^n$ and we have a unitary representation of the gauge group $G$ on $E$. The Higgs field is a scalar under Lorentz transformations (more precisely, a multiplet of $n$ complex scalars under the action of the gauge group $G$). We can arrange that the action of the Lie group $G$ on the space $E$ is unitary with respect to the standard Hermitian scalar product $\langle v,w\rangle=v^\dagger w$. The real part $\mathrm{Re}\langle\cdot\,,\cdot\rangle$ is a positive definite scalar product on $E$. 

We also assume that there is a {\bf potential} 
\begin{equation*}
V\colon E\longrightarrow\mathbb{R}
\end{equation*}
which is invariant under the action of $G$. The potential appears in the Lagrangian for the Higgs field $\Phi$:
\begin{equation*}
\mathcal{L}=\left(\nabla^{A\mu}\Phi\right)^\dagger\left(\nabla^{A}_\mu\Phi\right)-V(\Phi),
\end{equation*}
where 
\begin{equation*}
V(\Phi)(x)=V(\Phi(x))
\end{equation*}
for a spacetime point $x\in M$. This is a Lagrangian of Klein-Gordon type plus the potential which describes a certain self-interaction of the Higgs field.

\subsection{Spontaneously broken gauge theories}
In a gauge theory we demand that the Lagrangian is invariant under gauge transformations. Solutions to the field equations will then map to solutions under arbitrary gauge transformations $\sigma\colon M\rightarrow G$. {\em This is still true in a spontaneously broken gauge theory.} 
\begin{defn} A vector $v_0\in E$ is called {\bf vacuum vector} if it is a local minimum of the potential $V$. A Higgs field $\Phi$ is called {\bf vacuum} if its value is constant and equal to $v_0$ on all of spacetime. The stabilizer (isotropy) subgroup $H\subset G$ of all elements $g\in G$ with $g\cdot v_0=v_0$ is called the {\bf unbroken subgroup} of $G$. A gauge theory containing a Higgs field is called {\bf spontaneously broken} if $H\neq G$.
\end{defn}
In a spontaneously broken gauge theory the Lagrangian and hence the field equations are invariant under the full gauge group $G$. However, the vacuum value of the Higgs field is only invariant under a smaller subgroup $H$. Note that the symmetry can only be broken if $v_0\neq 0$. {\em The non-zero vacuum value of the Higgs field is precisely the reason why elementary particles have a mass.} We consider an example:

\begin{ex}\label{ex Higgs field potential} In the case of the electroweak interaction we have $E=\mathbb{C}^2$ and $G=SU(2)\times U(1)$. The potential of the Higgs field is
\begin{equation*}
V(v)=-\frac{1}{2}\mu v^\dagger v+\frac{1}{2}\lambda\left(v^\dagger v\right)^2
\end{equation*}
with certain constants $\mu,\lambda>0$. A vector $v_0$ is a vacuum vector if and only if 
\begin{equation*}
||v_0||=\sqrt{\frac{\mu}{2\lambda}}.
\end{equation*} 
The set of vacuum vectors (minima of the potential $V$) thus forms a 3-sphere in $\mathbb{C}^2$ around the origin of radius $||v_0||$.
\end{ex}

\subsection{The Hessian of the Higgs potential}
Let $v_0$ be a vacuum vector. We consider Higgs fields $\Phi$ whose values are in the vicinity of $v_0$:
\begin{equation*}
\Phi=v_0+\Delta\Phi,
\end{equation*} 
where $\Delta\Phi$ is the {\bf shifted Higgs field}. We want to derive an approximation to $V(\Phi)$ for small values of $\Delta\Phi$ using the Taylor expansion of the potential $V$.

The vector space $E$ (more precisely, the tangent space to $E$ in the point $v_0$) can be split as 
\begin{equation*}
E=T_{v_0}E=W_{v_0}\oplus W_{v_0}^\perp,
\end{equation*} 
where $W_{v_0}$ is the tangent space to the {\bf orbit} of the gauge group $G$ through the vacuum vector $v_0$:
\begin{equation*}
W_{v_0}=T_{v_0}(G\cdot v_0).
\end{equation*}
Note that the orbit $G\cdot v_0$ (since $G$ is compact) is diffeomorphic to the quotient space $G/H$, where $H$ is the unbroken subgroup. 
\begin{defn}
We set $d$ for the dimension of $W_{v_0}$. This is equal to the codimension of $H$ in $G$.
\end{defn}

Let $\mathrm{Hess}(V)$ denote the {\bf Hessian} of the potential $V$. The Hessian is a symmetric linear map 
\begin{equation*}
\mathrm{Hess}(V)\colon T_vE\longrightarrow T_vE
\end{equation*}
on the tangent space to $E$ in a point $v$ and can be defined on any Riemannian manifold as
\begin{equation*}
\mathrm{Hess}(V)(X)=\nabla_X\mathrm{grad}\,V,
\end{equation*} 
where in our situation $X$ is a vector tangent to $E$ and $\nabla$ denotes the (flat) Levi-Civita connection induced by the positive definite inner product on $E$. This linear map is symmetric in the sense that 
\begin{equation*}
\langle \mathrm{Hess}(V)(X),Y\rangle = \langle X,\mathrm{Hess}(V)Y\rangle.
\end{equation*} 
It corresponds to the matrix of second derivatives 
\begin{equation*}
\mathrm{Hess}(V)=\left(\frac{\partial^2V(x)}{\partial x_i\partial x_j}\right)
\end{equation*}
in standard coordinates $x$ on the vector space $E$.

We want to diagonalize this symmetric linear map in the vacuum point $v_0$. Since $V$ has a minimum along the whole orbit of $G$ through $v_0$ (as the potential is invariant under the action of $G$) it follows that $\mathrm{grad}\,V$ vanishes along the orbit and hence
\begin{equation*}
\mathrm{Hess}(V)_{v_0}(X)=0
\end{equation*}
for vectors $X\in W_{v_0}$ tangent to the orbit. This implies in particular that $W_{v_0}$ and $W_{v_0}^\perp$ are invariant subspaces of the Hessian.
\begin{lem}
We can thus find (real) orthonormal bases consisting of eigenvectors \begin{align*}
e_1,\ldots,e_d&\in W_{v_0}\\
f_1,\ldots,f_{2n-d}&\in W_{v_0}^\perp
\end{align*}
of the Hessian in $v_0$ where the $e_i$ have eigenvalue $0$ and the $f_j$ have non-negative eigenvalues (because $v_0$ is a local minimum). We set $2m_{f_j}^2$ for the eigenvalue of $f_j$ (with $m_{f_j}\geq 0$).
\end{lem}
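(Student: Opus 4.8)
The plan is to invoke the Spectral Theorem for self-adjoint operators, applied to $\mathrm{Hess}(V)_{v_0}$ viewed as a symmetric $\mathbb{R}$-linear map on the $2n$-dimensional real inner product space $(E,\mathrm{Re}\langle\cdot\,,\cdot\rangle)$. The symmetry of the Hessian has already been recorded, and we have observed that both $W_{v_0}$ and its orthogonal complement $W_{v_0}^\perp$ are invariant subspaces. I would therefore diagonalize the two blocks separately and concatenate the resulting bases.

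First I would dispose of $W_{v_0}$: since it has already been shown that $\mathrm{Hess}(V)_{v_0}(X)=0$ for every $X\in W_{v_0}$ (the gradient of $V$ vanishes along the orbit $G\cdot v_0$ on which $V$ is constant), any orthonormal basis $e_1,\ldots,e_d$ of $W_{v_0}$ automatically consists of eigenvectors with eigenvalue $0$, and no diagonalization is needed. Next I would restrict $\mathrm{Hess}(V)_{v_0}$ to the invariant subspace $W_{v_0}^\perp$ of real dimension $2n-d$. The restriction is again self-adjoint on a finite-dimensional real inner product space, so the Spectral Theorem produces an orthonormal basis $f_1,\ldots,f_{2n-d}$ of eigenvectors with real eigenvalues. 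Taking the union of the two bases gives an orthonormal eigenbasis of all of $E$ of the stated form.

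Finally I would fix the sign of the eigenvalues. Because $v_0$ is a local minimum of $V$, the second-order necessary condition makes the Hessian positive semidefinite, i.e. $\langle\mathrm{Hess}(V)_{v_0}(X),X\rangle\geq 0$ for all $X$; in particular the eigenvalue $\lambda_j$ attached to each $f_j$ satisfies $\lambda_j\geq 0$, and writing it as $\lambda_j=2m_{f_j}^2$ with $m_{f_j}=\sqrt{\lambda_j/2}\geq 0$ is merely a choice of notation. There is no genuine obstacle in this argument; the only point that requires care is to remember that $E=\mathbb{C}^n$ must be regarded as a \emph{real} vector space of dimension $2n$ carrying the inner product $\mathrm{Re}\langle\cdot\,,\cdot\rangle$, which is precisely why the index $j$ ranges up to $2n-d$ and why the Spectral Theorem is applied in its real (rather than Hermitian) form.
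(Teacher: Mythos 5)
Your argument is correct and is exactly the one the paper intends: the Hessian vanishes on $W_{v_0}$, the orthogonal complement $W_{v_0}^\perp$ is then an invariant subspace of the symmetric operator, and the real spectral theorem applied to the restriction together with positive semidefiniteness at the local minimum $v_0$ yields the stated orthonormal eigenbasis with non-negative eigenvalues. Your remark that $E=\mathbb{C}^n$ must be treated as a real $2n$-dimensional inner product space with $\mathrm{Re}\langle\cdot\,,\cdot\rangle$ is precisely the point the paper relies on as well.
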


\subsection{The Goldstone and Higgs bosons}
\begin{defn}We expand the shifted Higgs field $\Delta\Phi$ in the orthonormal eigenbasis of the Hessian:
\begin{equation*}
\Delta\Phi=\frac{1}{\sqrt{2}}\sum_{i=1}^d\xi_ie_i+\frac{1}{\sqrt{2}}\sum_{j=1}^{2n-d}\eta_jf_j,
\end{equation*}
where $\xi_i$  and $\eta_j$ are real scalar fields on the spacetime manifold $M$. The $\xi_i$ are called {\bf Goldstone bosons}, the $\eta_j$ are called {\bf Higgs bosons}. The number of Goldstone bosons is equal to the dimension of $G/H$. The number of Higgs bosons is equal to the real dimension of the multiplet space $E$ minus the dimension of $G/H$. The Goldstone bosons correspond to perturbations of the Higgs field $\Phi$ along the orbit of $G$ through the vacuum vector $v_0$, while the Higgs bosons correspond to perturbations orthogonally to the orbit. As particles in the associated quantum field theory the Goldstone and Higgs bosons are thus minimal excitations of the Higgs field from its vacuum state.
\end{defn}
\begin{thm}\label{thm potential up to second order}
Up to second order in the shifted Higgs field we have by the Taylor formula (the first order term vanishes, because $v_0$ is a critical point)
\begin{equation*}
V(\Phi)\approx V(v_0)+\frac{1}{2}\sum_{j=1}^{2n-d} m_{f_j}^2\eta_j^2.
\end{equation*}
\end{thm}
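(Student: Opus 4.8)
The plan is to apply the second-order Taylor formula for $V$ at the vacuum vector $v_0$ directly in the orthonormal eigenbasis of the Hessian, and then simply read off the coefficients. Writing $\Phi=v_0+\Delta\Phi$ and using the flat Levi-Civita connection on $E$ induced by the real positive definite product $\mathrm{Re}\langle\cdot\,,\cdot\rangle$, the expansion reads
\[
V(\Phi)=V(v_0)+\langle\mathrm{grad}\,V(v_0),\Delta\Phi\rangle+\frac{1}{2}\langle\mathrm{Hess}(V)_{v_0}(\Delta\Phi),\Delta\Phi\rangle+O(||\Delta\Phi||^3).
\]
First I would dispose of the linear term: since $v_0$ is a local minimum of $V$, we have $\mathrm{grad}\,V(v_0)=0$, so the first-order contribution vanishes, exactly as the statement records.

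Next I would substitute the eigenbasis expansion of $\Delta\Phi$ into the quadratic term. Applying $\mathrm{Hess}(V)_{v_0}$ and using that the $e_i$ are null eigenvectors while $\mathrm{Hess}(V)_{v_0}(f_j)=2m_{f_j}^2 f_j$, the Goldstone directions drop out completely and one is left with
\[
\mathrm{Hess}(V)_{v_0}(\Delta\Phi)=\sqrt{2}\sum_{j=1}^{2n-d}\eta_j\, m_{f_j}^2 f_j.
\]
Pairing this against $\Delta\Phi$ and invoking orthonormality of the eigenbasis ($\langle f_j,e_i\rangle=0$ and $\langle f_j,f_k\rangle=\delta_{jk}$) collapses the double sum to a single sum over $j$; the factors $\sqrt{2}$ and $1/\sqrt{2}$ coming from the normalization of $\Delta\Phi$ combine with the eigenvalue $2m_{f_j}^2$ to produce precisely $\sum_j m_{f_j}^2\eta_j^2$. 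Halving this and adding back $V(v_0)$ gives the claimed approximation.

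This argument is essentially bookkeeping, so I do not expect a genuine obstacle. The one point demanding care is the interplay of the numerical constants: the normalization $1/\sqrt{2}$ in the definition of $\Delta\Phi$ and the convention $2m_{f_j}^2$ for the eigenvalues are chosen exactly so that the quadratic term emerges in the canonical form $\frac{1}{2}\sum_j m_{f_j}^2\eta_j^2$, the standard mass term for a family of real scalar fields $\eta_j$ of mass $m_{f_j}$. I would also emphasize conceptually that it is the vanishing of $\mathrm{Hess}(V)_{v_0}$ on $W_{v_0}$---equivalently, the masslessness of the $d$ Goldstone modes $\xi_i$---that removes them from the quadratic potential, a fact that foreshadows the Higgs mechanism treated in the later sections.
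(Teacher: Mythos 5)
Your proposal is correct and follows exactly the route the paper intends: the paper states the result as an immediate consequence of the second-order Taylor formula (noting only that the first-order term vanishes since $v_0$ is critical) and leaves the eigenbasis bookkeeping implicit, which is precisely the computation you have written out, with the constants $1/\sqrt{2}$ and $2m_{f_j}^2$ combining as you describe. The only cosmetic point is that the pairing in your Taylor expansion should consistently be the real inner product $\mathrm{Re}\langle\cdot\,,\cdot\rangle$ on $E$ viewed as a real $2n$-dimensional space, which is the product with respect to which the $e_i,f_j$ form a real orthonormal eigenbasis.
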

This is (up to the irrelevant constant $V(v_0)$) the standard form of a the Klein-Gordon mass term for real scalar fields $\eta_j$ (see Definition \ref{defn Klein Gordon Lag}). The {\bf Goldstone bosons have zero mass}, while the {\bf Higgs bosons have mass $m_{f_j}\geq 0$}. The Lagrangian $\mathcal{L}$ for the Higgs field $\Phi$ expressed in terms of the shifted Higgs field then contains Klein-Gordon summands of the form 
\begin{equation*}
\frac{1}{2}\left((\partial^\mu\eta_j)(\partial_\mu\eta_j)-m_{f_j}^2\eta_j^2\right),
\end{equation*}
plus other terms, some of which we will determine later.
\begin{ex}\label{ex Higgs mass} In the case of electroweak theory we have $G=SU(2)\times U(1)$ and $H=U(1)$, embedded in $G$ as a diagonal subgroup, not as the second factor (we will define the precise representation of $G$ on the vector space $E$ later in Section \ref{section electroweak} and determine the subgroup $H$). Therefore we have three Goldstone bosons and one Higgs boson, because $E$ has complex dimension 2. 

One can check that the {\bf mass of the Higgs boson} is $\sqrt{\mu}$: We take
\begin{equation*}
v_0=\left(\begin{array}{c}0\\\sqrt{\frac{\mu}{2\lambda}} \end{array}\right).
\end{equation*}
Then $W_{v_0}$ is the real span of 
\begin{equation*}
\left(\begin{array}{c}1\\0\end{array}\right),\quad \left(\begin{array}{c}i\\0\end{array}\right),\quad\left(\begin{array}{c}0\\ i\end{array}\right)
\end{equation*}
and $W_{v_0}^\perp$ is the real span of 
\begin{equation*}
\left(\begin{array}{c}0\\1\end{array}\right).
\end{equation*}
In the standard coordinates $x_1+ix_2,x_3+ix_4$ for $\mathbb{C}^2$ the only non-vanishing entry of the Hessian $\mathrm{Hess}(V)_{v_0}$ at $v_0$ is the $33$-component, which is $2\mu$. This implies the claim.
\end{ex}

\section{Unitary gauge}\label{sect unitary gauge}

In this section we want to discuss the notion of unitary gauge in the mechanism of symmetry breaking. We use the same notation as before: We consider a Higgs field $\Phi\colon M\rightarrow E$ on the spacetime manifold $M$ with values in a vector space $E=\mathbb{C}^n$. On the vector space $E$ we have a representation of the compact gauge group $G$, unitary with respect to the standard Hermitian scalar product $\langle\cdot\,,\cdot\rangle$. We choose a vacuum vector $v_0\in E$.

We then saw that we can write $\Phi=v_0+\Delta\Phi$, where the shifted Higgs field $\Delta\Phi$ can be decomposed as 
\begin{equation*}
\Delta\Phi=\frac{1}{\sqrt{2}}\sum_{i=1}^d\xi_ie_i+\frac{1}{\sqrt{2}}\sum_{j=1}^{2n-d}\eta_jf_j,
\end{equation*}
with real valued scalar fields $\xi_i$ and $\eta_j$ on spacetime $M$, called Goldstone bosons  and Higgs bosons. Here the vectors $e_i$ are tangential to the orbit of the gauge group $G$ through $v_0$ and the vectors $f_j$ are orthogonal to the orbit.

Note that the orbit through $v_0$ is orthogonal to the vector $v_0$ because of our assumption that the action of the gauge group $G$ is unitary. Therefore the space spanned by the vectors $f_j$ will contain a direction parallel to $v_0$. In the case of the electroweak theory with $G=SU(2)\times U(1)$ this is actually the only orthogonal direction. Hence in this situation there is only one Higgs boson.

The scalar fields $\xi_i$ and $\eta_j$ depend both on the scalar field $\Phi$ and the vacuum vector $v_0$. We can therefore say that for a given Higgs field $\Phi$ and a given vacuum vector $v_0$ we have certain Goldstone bosons $\xi_i$ and Higgs bosons $\eta_j$. We always consider $v_0$ and the vectors $e_i$ and $f_j$ fixed.

\subsection{Unitary gauge and the Goldstone bosons}
The intuition behind the Goldstone bosons is that they do not correspond to physical fields (or particles), because they can be {\em gauged away} (the Higgs bosons, on the contrary, cannot be gauged away). We set:

\begin{defn}We call a smooth gauge transformation $\sigma\colon M\rightarrow G$ {\bf unitary gauge} for a given Higgs field $\Phi$ with respect to a vacuum vector $v_0$ if all Goldstone bosons of the transformed field $\sigma\cdot\Phi$ with respect to the vacuum vector $v_0$ vanish identically on $M$. We say that a Higgs field is in unitary gauge if all its Goldstone bosons vanish identically.
\end{defn}
We want to find an equivalent condition for a gauge to be unitary. Since the Lie group $G$ acts on $E$ there is an induced action of the Lie algebra $\mathfrak{g}$ on $E$. We write $A\cdot v$ for $A\in\mathfrak{g}$ and $v\in E$. Since the action of $G$ is through unitary isomorphisms, the action of $\mathfrak{g}$ is through skew-Hermitian endomorphisms, i.e.
\begin{equation*}
\langle A\cdot v,w\rangle+ \langle v,A\cdot w\rangle=0\quad\forall A\in\mathfrak{g}\,\forall v,w\in E.
\end{equation*}
\begin{defn}For a point $x\in M$ we consider the linear map 
\begin{equation*}
s^\Phi_x\colon\mathfrak{g}\longrightarrow\mathbb{R},\quad A\longmapsto\mathrm{Re}\langle\Phi(x),A\cdot v_0\rangle.
\end{equation*}
Bleecker calls this map {\bf fiber derivative} (see \cite{Bleecker}).
\end{defn}
We can then show:
\begin{lem}Let $\Phi$ be a given Higgs field and $v_0$ a given vacuum vector. Then all Goldstone bosons of $\Phi$ with respect to $v_0$ vanish in the point $x\in M$ if and only if $s^\Phi_x\equiv 0$.
\end{lem}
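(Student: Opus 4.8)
The plan is to translate both conditions into statements about the \emph{real} inner product $\mathrm{Re}\langle\cdot\,,\cdot\rangle$ on $E$ and then observe that they literally coincide, once a single term has been discarded using unitarity.

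First I would identify the tangent space to the orbit explicitly. The orbit map $G\to G\cdot v_0$, $g\mapsto g\cdot v_0$, has differential at the identity sending $A\in\mathfrak{g}$ to $A\cdot v_0$, so $W_{v_0}=T_{v_0}(G\cdot v_0)$ is exactly $\{A\cdot v_0 : A\in\mathfrak{g}\}$. Because the vectors $e_i$ form a real orthonormal basis of $W_{v_0}$ and are orthogonal to the $f_j$, the Goldstone component is recovered as $\xi_i=\sqrt{2}\,\mathrm{Re}\langle e_i,\Delta\Phi(x)\rangle$. Hence all Goldstone bosons vanish at $x$ precisely when $\Delta\Phi(x)$ is real-orthogonal to $W_{v_0}$, i.e.\ when $\mathrm{Re}\langle A\cdot v_0,\Delta\Phi(x)\rangle=0$ for every $A\in\mathfrak{g}$.

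Next I would compute the fiber derivative directly. Writing $\Phi(x)=v_0+\Delta\Phi(x)$ and using symmetry of the real inner product,
\[
s^\Phi_x(A)=\mathrm{Re}\langle v_0,A\cdot v_0\rangle+\mathrm{Re}\langle A\cdot v_0,\Delta\Phi(x)\rangle.
\]
The first summand vanishes for every $A$: since the representation is unitary, $A$ acts as a skew-Hermitian endomorphism, so $\langle A\cdot v_0,v_0\rangle+\langle v_0,A\cdot v_0\rangle=0$, and taking real parts (using $\langle A\cdot v_0,v_0\rangle=\overline{\langle v_0,A\cdot v_0\rangle}$) gives $\mathrm{Re}\langle v_0,A\cdot v_0\rangle=0$. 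This is exactly the statement, already noted in the text, that the orbit is orthogonal to $v_0$. Therefore $s^\Phi_x(A)=\mathrm{Re}\langle A\cdot v_0,\Delta\Phi(x)\rangle$.

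Comparing the two computations, the condition $s^\Phi_x\equiv 0$ is word-for-word the condition that $\Delta\Phi(x)$ be real-orthogonal to all of $W_{v_0}=\mathfrak{g}\cdot v_0$, which is the vanishing of all Goldstone bosons at $x$; this settles both implications at once. I do not expect a serious obstacle. The only delicate point is the bookkeeping between the complex Hermitian form and its real part: one must remember that $\mathrm{Re}\langle\cdot\,,\cdot\rangle$ is symmetric, so the two slots may be freely exchanged, and that it is this real form (not the Hermitian form itself) with respect to which the $e_i,f_j$ are orthonormal and the decomposition of $\Delta\Phi$ is taken.
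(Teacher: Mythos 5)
Your proposal is correct and follows essentially the same route as the paper: both use skew-Hermiticity of the Lie algebra action to discard the $\mathrm{Re}\langle v_0, A\cdot v_0\rangle$ term, identify $W_{v_0}$ with $\mathfrak{g}\cdot v_0$, and reduce the statement to real-orthogonality of $\Delta\Phi(x)$ to the orbit direction, which is exactly the vanishing of the coefficients $\xi_i$ in the orthonormal expansion. The only cosmetic difference is that the paper expands $\Delta\Phi$ in the eigenbasis and reads off the $\xi_i$ directly, while you phrase the same step as recovering $\xi_i$ via $\sqrt{2}\,\mathrm{Re}\langle e_i,\Delta\Phi(x)\rangle$.
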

\begin{proof} Since $A\in\mathfrak{g}$ acts through skew-Hermitian endomorphisms, we have 
\begin{equation*}
\mathrm{Re}\langle\Phi(x),A\cdot v_0\rangle =\mathrm{Re}\langle \Delta\Phi(x),A\cdot v_0\rangle.
\end{equation*} 
Note that $\mathfrak{g}\cdot v_0=W_{v_0}$, the tangent space to the orbit. Hence 
\begin{equation*}
\mathrm{Re}\langle \Delta\Phi(x),A\cdot v_0\rangle=\frac{1}{\sqrt{2}}\sum_{i=1}^d\mathrm{Re}\langle\xi_i(x)e_i,A\cdot v_0\rangle.
\end{equation*}
The claim follows because the vectors $e_i$ are also elements of $W_{v_0}$.
\end{proof}
\begin{defn}The {\bf unitary set} of a Higgs field $\Phi$ with respect to the vacuum vector $v_0$ is defined as
\begin{equation*}
U(\Phi,v_0)=\{(x,g)\in M\times G\mid \mathrm{Re}\langle g\cdot\Phi(x), A\cdot v_0\rangle=0\quad\forall A\in\mathfrak{g}\}.
\end{equation*}
\end{defn}
We then get:
\begin{thm}A smooth gauge transformation $\sigma\colon M\rightarrow G$ is a unitary gauge for $\Phi$ with respect to $v_0$ if and only if the map 
\begin{equation*}
M\longrightarrow M\times G,\quad x \longmapsto (x,\sigma(x))
\end{equation*}
has image in the unitary set $U(\Phi,v_0)$.
\end{thm}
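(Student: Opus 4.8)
The plan is to recognize this statement as essentially a restatement of the preceding Lemma, applied not to $\Phi$ itself but to the transformed Higgs field $\sigma\cdot\Phi$. First I would note that $\sigma\cdot\Phi$ is again a smooth map $M\to E$, hence a perfectly good Higgs field, and that by definition $\sigma$ is a unitary gauge for $\Phi$ precisely when all Goldstone bosons of $\sigma\cdot\Phi$ (taken with respect to the same fixed vacuum vector $v_0$ and the same fixed eigenbasis of the Hessian at $v_0$) vanish identically on $M$, i.e.\ vanish at every point $x\in M$.

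Next I would apply the preceding Lemma to the field $\sigma\cdot\Phi$: it asserts that the Goldstone bosons of $\sigma\cdot\Phi$ vanish in $x$ if and only if the fiber derivative $s^{\sigma\cdot\Phi}_x\colon\mathfrak{g}\to\mathbb{R}$ vanishes identically. Unwinding the definition of the fiber derivative gives
\begin{equation*}
s^{\sigma\cdot\Phi}_x(A)=\mathrm{Re}\langle(\sigma\cdot\Phi)(x),A\cdot v_0\rangle=\mathrm{Re}\langle\sigma(x)\cdot\Phi(x),A\cdot v_0\rangle,
\end{equation*}
so the condition $s^{\sigma\cdot\Phi}_x\equiv 0$ reads $\mathrm{Re}\langle\sigma(x)\cdot\Phi(x),A\cdot v_0\rangle=0$ for all $A\in\mathfrak{g}$, which is exactly the membership condition $(x,\sigma(x))\in U(\Phi,v_0)$. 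Quantifying over all $x\in M$, the assertion ``all Goldstone bosons of $\sigma\cdot\Phi$ vanish identically'' becomes ``$(x,\sigma(x))\in U(\Phi,v_0)$ for every $x$'', that is, the map $x\mapsto(x,\sigma(x))$ has image in $U(\Phi,v_0)$. Chaining these equivalences proves both directions simultaneously.

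The step I expect to carry the (modest) weight is the bookkeeping in the first paragraph: one must check that the Goldstone bosons of $\sigma\cdot\Phi$ are defined with respect to the same reference data as those of $\Phi$ --- the fixed $v_0$ and the fixed orthonormal eigenbasis $e_1,\ldots,e_d,f_1,\ldots,f_{2n-d}$ of $\mathrm{Hess}(V)_{v_0}$ --- since only then does the previous Lemma apply verbatim to $\sigma\cdot\Phi$. Once this is granted, no genuine analytic input is required; the whole argument is a definition-chase, and the smoothness of $\sigma\cdot\Phi$ (guaranteed by smoothness of $\sigma$ and $\Phi$ and of the $G$-action) is all the regularity one uses. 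I do not anticipate any real obstacle beyond ensuring that ``unitary gauge'' and ``image in the unitary set'' are both measured against the identical fixed choices.
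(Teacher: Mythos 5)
Your proof is correct and follows exactly the route the paper intends: the paper states this theorem without proof as an immediate consequence of the preceding Lemma (applied to the transformed field $\sigma\cdot\Phi$) together with the definitions of the fiber derivative and the unitary set, which is precisely the definition-chase you carry out. Your added care about keeping $v_0$ and the eigenbasis fixed throughout is consistent with the paper's standing convention and introduces no discrepancy.
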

To find a unitary gauge $\sigma$ we thus have to rotate the vectors $\Phi(x)\in E$ for each $x\in M$ through the action of the gauge group in such a way that the shifted vectors $\Delta(\sigma(x)\cdot\Phi(x))$ of the transformed field are perpendicular to the orbit of $G$ through $v_0$.
\begin{ex}In the case of electroweak theory we have $E=\mathbb{C}^2$ and $G=SU(2)\times U(1)$. The vacuum vector is some non-zero $v_0\in\mathbb{C}^2$. Its orbit under the gauge group $G$ is the 3-sphere of radius $||v_0||$ around the origin. The only direction perpendicular to the orbit through $v_0$ is the direction of $v_0$ itself. If $\Phi$ is a given Higgs field, a unitary gauge $\sigma$ therefore has to ensure that all transformed field values $\sigma(x)\cdot\Phi(x)$, as $x$ ranges over spacetime $M$, are multiples of the vector $v_0$.
\end{ex}
More generally we have:
\begin{cor}A gauge transformation $\sigma\colon M\rightarrow G$ is a unitary gauge for a Higgs field $\Phi$ with respect to a vacuum vector $v_0$ if and only if all transformed field values $\sigma(x)\cdot\Phi(x)$, for $x\in M$, lie in the vector subspace of $E$ that contains the vector $v_0$ and which is the orthogonal complement to the tangent space of the orbit under $G$ through $v_0$.
\end{cor}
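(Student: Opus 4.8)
The plan is to chain the preceding theorem (unitary gauge $\Leftrightarrow$ image in $U(\Phi,v_0)$) together with the definition of the unitary set, and then to reinterpret the resulting algebraic condition geometrically using the identity $\mathfrak{g}\cdot v_0=W_{v_0}$ together with the fact that $v_0\in W_{v_0}^\perp$.

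First I would invoke the theorem: $\sigma$ is a unitary gauge for $\Phi$ with respect to $v_0$ if and only if $(x,\sigma(x))\in U(\Phi,v_0)$ for every $x\in M$. Unwinding the definition of the unitary set, this is equivalent to
\[
\mathrm{Re}\langle \sigma(x)\cdot\Phi(x),\,A\cdot v_0\rangle=0\qquad\text{for all }A\in\mathfrak{g}\text{ and all }x\in M.
\]

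The key step is to observe that as $A$ ranges over the Lie algebra $\mathfrak{g}$, the vectors $A\cdot v_0$ sweep out exactly $\mathfrak{g}\cdot v_0=W_{v_0}$, the tangent space to the orbit $G\cdot v_0$ at the point $v_0$. Consequently the displayed condition says that $\mathrm{Re}\langle\sigma(x)\cdot\Phi(x),w\rangle=0$ for every $w\in W_{v_0}$; in other words, $\sigma(x)\cdot\Phi(x)$ is orthogonal to the tangent space of the orbit with respect to the real inner product $\mathrm{Re}\langle\cdot\,,\cdot\rangle$. This is precisely the statement that $\sigma(x)\cdot\Phi(x)\in W_{v_0}^\perp$ for all $x\in M$.

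It remains to identify $W_{v_0}^\perp$ with the subspace named in the corollary. By construction $W_{v_0}^\perp$ is the orthogonal complement of the tangent space to the orbit, and since the $G$-action is unitary the orbit through $v_0$ is orthogonal to $v_0$, so $v_0\in W_{v_0}^\perp$. Hence $W_{v_0}^\perp$ is exactly the subspace that contains $v_0$ and is the orthogonal complement to the tangent space of the orbit, which yields the asserted equivalence. I expect no genuine obstacle here; the single point requiring care is that all orthogonality is taken with respect to the real part $\mathrm{Re}\langle\cdot\,,\cdot\rangle$ rather than the Hermitian form itself, which is exactly the reason the real part enters the definitions of the fiber derivative and of the unitary set.
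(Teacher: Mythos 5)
Your proposal is correct and follows exactly the route the paper intends: the corollary is stated without explicit proof as an immediate consequence of the preceding theorem, the definition of the unitary set, the identity $\mathfrak{g}\cdot v_0=W_{v_0}$, and the earlier remark that the orbit through $v_0$ is orthogonal to $v_0$ because the action is unitary. Your attention to the fact that orthogonality is taken with respect to $\mathrm{Re}\langle\cdot\,,\cdot\rangle$ is exactly the right point of care.
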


\subsection{Existence of unitary gauges}
The existence problem for unitary gauges seems to be non-trivial even in the local case. Bleecker has a criterion when at least locally unitary gauges exist. Recall that $H$ is the so-called unbroken subgroup of $G$ fixing the vacuum vector $v_0$.

\begin{defn}Let $\mathfrak{h}^\perp$ denote some orthogonal complement of the Lie algebra $\mathfrak{h}$ of $H$ inside $\mathfrak{g}$. For a point $(x,g)\in U(\Phi,v_0)$ we set 
\begin{equation*}
B^\Phi_{(x,g)}\colon\mathfrak{h}^\perp\times\mathfrak{h}^\perp\longrightarrow\mathbb{R},\quad (A,B)\longmapsto \mathrm{Re}\langle g\cdot\Phi(x),A\cdot B\cdot v_0\rangle.
\end{equation*} 
Bleecker calls this map the {\bf broken Hessian}. It is a bilinear, symmetric form because 
\begin{equation*}
\mathrm{Re}\langle g\cdot\Phi(x),[A,B]\cdot v_0\rangle=0
\end{equation*}
for $(x,g)\in U(\Phi,v_0)$.
\end{defn}
Bleecker then proofs the following theorem:
\begin{thm}If $(x,g)\in U(\Phi,v_0)$ is a point such that $B^\Phi_{(x,g)}$ is non-degenerate, then there exists a unitary gauge $\sigma\colon W\rightarrow G$ in a neighbourhood $W\subset M$ of $x$ such that $\sigma(x)=g$.
\end{thm}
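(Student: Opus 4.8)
The assertion is a purely local existence statement, so the natural instrument is the implicit function theorem, with the non-degeneracy hypothesis on the broken Hessian supplying exactly the invertibility that the argument needs. Write $(x_0,g_0)$ for the distinguished point of $U(\Phi,v_0)$ in the hypothesis and let $x$ now range over a neighbourhood of $x_0$ in $M$. The plan is to rewrite the property ``$\sigma$ is a unitary gauge'' as the vanishing of a single smooth map and then to solve for $\sigma$.

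First I would assemble the fibre derivatives into one map. Recall that $\mathfrak h=\ker(A\mapsto A\cdot v_0)$, so that $A\cdot v_0=0$ for $A\in\mathfrak h$ and the fibre derivative $s^{g\cdot\Phi}_x$ automatically annihilates $\mathfrak h$; thus its vanishing is equivalent to its vanishing on the $d$-dimensional complement $\mathfrak h^\perp$, which maps isomorphically onto $W_{v_0}$ under $A\mapsto A\cdot v_0$. I therefore consider the smooth map
\[
\Theta\colon M\times G\longrightarrow (\mathfrak h^\perp)^*,\qquad \Theta(x,g)(A)=\mathrm{Re}\langle g\cdot\Phi(x),A\cdot v_0\rangle\quad(A\in\mathfrak h^\perp),
\]
so that, by the preceding characterization of unitary gauges, $\sigma$ is a unitary gauge on a set $W$ exactly when $\Theta(x,\sigma(x))=0$ for all $x\in W$, and the hypothesis $(x_0,g_0)\in U(\Phi,v_0)$ says precisely that $\Theta(x_0,g_0)=0$.

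Next I would choose a parametrization of $G$ near $g_0$ that makes the system square. Writing $g=\exp(C)\,g_0$ with $C\in\mathfrak h^\perp$ and setting $F(x,C)=\Theta(x,\exp(C)g_0)$, I obtain a smooth map $F\colon M\times\mathfrak h^\perp\to(\mathfrak h^\perp)^*$ between spaces whose fibre dimensions agree ($\dim\mathfrak h^\perp=d=\dim(\mathfrak h^\perp)^*$), with $F(x_0,0)=0$. The crux is the linearization of $F$ in $C$ at $(x_0,0)$: differentiating $\mathrm{Re}\langle\exp(tC)\cdot(g_0\cdot\Phi(x_0)),A\cdot v_0\rangle$ at $t=0$ gives $\mathrm{Re}\langle C\cdot(g_0\cdot\Phi(x_0)),A\cdot v_0\rangle$, and moving $C$ across the inner product by the skew-Hermitian identity for the $\mathfrak g$-action turns this into $-\mathrm{Re}\langle g_0\cdot\Phi(x_0),C\cdot A\cdot v_0\rangle=-B^\Phi_{(x_0,g_0)}(C,A)$. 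Hence $D_CF(x_0,0)=-B^\Phi_{(x_0,g_0)}$, viewed as the linear map $\mathfrak h^\perp\to(\mathfrak h^\perp)^*$, $C\mapsto B^\Phi_{(x_0,g_0)}(C,\cdot)$.

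Finally I would invoke the non-degeneracy and the implicit function theorem. Since $B^\Phi_{(x_0,g_0)}$ is non-degenerate, $D_CF(x_0,0)$ is an isomorphism, so the implicit function theorem yields a smooth map $x\mapsto C(x)\in\mathfrak h^\perp$ on a neighbourhood $W$ of $x_0$ with $C(x_0)=0$ and $F(x,C(x))=0$. Setting $\sigma(x)=\exp(C(x))\,g_0$ then produces a smooth gauge transformation with $\sigma(x_0)=g_0$ whose graph lies in $U(\Phi,v_0)$, i.e.\ a local unitary gauge, as required. I expect the main obstacle to be the linearization in the third paragraph: one must restrict the variation to $\mathfrak h^\perp$ so that the system is square (variations along $\mathfrak h$ are redundant and would render the full $\mathfrak g$-linearization degenerate), and one must use unitarity correctly to convert the first-order variation of the gauge-fixing condition into the second-order expression defining the broken Hessian, which is what ties the invertibility to the stated hypothesis.
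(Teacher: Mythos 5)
Your proof is correct and follows exactly the route the paper indicates (it only remarks that ``the proof uses the regular value theorem'', which is the implicit function theorem in the form you apply it): reduce the gauge-fixing condition to a square system on the slice $\exp(\mathfrak h^\perp)g_0$, identify the linearization with $-B^\Phi_{(x_0,g_0)}$ via skew-Hermiticity, and solve. The computation of the linearization and the observation that the condition is automatic on $\mathfrak h$ are both accurate, so nothing is missing.
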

The proof uses the regular value theorem.

\section{Mass generation for gauge bosons}\label{sect mass gen}

The purpose of this section is to describe how masses of gauge bosons are generated through the Higgs field. 

\subsection{Broken und unbroken gauge bosons}
We assume that the gauge group $G$ is a compact, connected Lie group of dimension $r$. The Lie algebra $\mathfrak{g}$ has an $\mathrm{Ad}$-invariant positive definite scalar product $k$, determined by the coupling constants.

Let $H\subset G$ denote the stabilizer subgroup of the vacuum vector $v_0\in E$. We denote by $\mathfrak{h}\subset\mathfrak{g}$ the Lie algebra of $H$ and by $\mathfrak{h}^\perp$ its orthogonal complement with respect to the scalar product $k$. We denote the dimension of $\mathfrak{h}^\perp$ as before by $d$.

\begin{defn}\label{defn bilinear m} We define a positive semidefinite, bilinear symmetric form $m$ on $\mathfrak{g}$ by
\begin{equation*}
m\colon\mathfrak{g}\times\mathfrak{g}\longrightarrow\mathbb{R},\quad (A,B)\longmapsto\mathrm{Re}\langle A\cdot v_0,B\cdot v_0\rangle.
\end{equation*}
Here $\langle\cdot\,,\cdot\rangle$ denotes the Hermitian inner product on the vector space $E$.
\end{defn}
Note that $A\cdot v_0=0$ for all $A\in\mathfrak{h}$, while the map $A\mapsto A\cdot v_0$ is injective on the orthogonal complement $\mathfrak{h}^\perp$. Hence the restriction of $m$ onto the Lie algebra $\mathfrak{h}$ vanishes identically and the restriction onto the orthogonal complement $\mathfrak{h}^\perp$ is positive definite. Since $m$ is a symmetric form we can diagonalize it and get:
\begin{prop}We can find orthonormal bases (with respect to $k$) $\alpha_1,\ldots,\alpha_d$ of the subspace $\mathfrak{h}^\perp$ (called {\bf broken generators}) and $\alpha_{d+1},\ldots,\alpha_r$ of the subspace $\mathfrak{h}$ (called {\bf unbroken generators}) such that the symmetric form $m$ is diagonal in this basis. We have 
\begin{equation*}
m(\alpha_i,\alpha_i)=\frac{1}{2}M_i^2\geq 0,
\end{equation*}
where $M_i>0$ for the broken generators and $M_i=0$ for the unbroken generators. 
\end{prop}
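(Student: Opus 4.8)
The plan is to reduce the statement to the simultaneous diagonalization of a symmetric bilinear form relative to the inner product $k$, carried out blockwise with respect to the $k$-orthogonal splitting $\mathfrak{g}=\mathfrak{h}^\perp\oplus\mathfrak{h}$. First I would record the block structure of $m$. For $B\in\mathfrak{h}$ we have $B\cdot v_0=0$, hence $m(A,B)=\mathrm{Re}\langle A\cdot v_0,B\cdot v_0\rangle=0$ for \emph{every} $A\in\mathfrak{g}$; in particular $m$ vanishes identically on $\mathfrak{h}$ and the mixed block $m(\mathfrak{h}^\perp,\mathfrak{h})$ is zero. Combined with the facts recalled just before the statement, namely that the restriction of $m$ to $\mathfrak{h}^\perp$ is positive definite, this shows that relative to $\mathfrak{g}=\mathfrak{h}^\perp\oplus\mathfrak{h}$ the form $m$ is block diagonal, with a positive-definite block on $\mathfrak{h}^\perp$ and the zero block on $\mathfrak{h}$.

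Next I would encode $m$ by a $k$-self-adjoint endomorphism. Since $k$ is positive definite, there is a unique linear map $L\colon\mathfrak{g}\to\mathfrak{g}$ with $m(A,B)=k(LA,B)$ for all $A,B$, and $L$ is $k$-self-adjoint because $m$ is symmetric. The block structure just noted translates into $L\mathfrak{h}^\perp\subseteq\mathfrak{h}^\perp$ and $L\mathfrak{h}=0$: indeed $k(LA,B)=m(A,B)=0$ for $A\in\mathfrak{h}^\perp$, $B\in\mathfrak{h}$ forces $LA\in\mathfrak{h}^\perp$, while $m(A,\cdot)\equiv 0$ for $A\in\mathfrak{h}$ forces $LA=0$. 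Thus $\mathfrak{h}\subseteq\ker L$, and $L$ restricts to a $k$-self-adjoint, positive-definite operator on $\mathfrak{h}^\perp$.

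The statement then follows from the spectral theorem applied to $L$. On $\mathfrak{h}^\perp$ the restricted operator is symmetric and positive definite, so it admits a $k$-orthonormal eigenbasis $\alpha_1,\ldots,\alpha_d$ with strictly positive eigenvalues; writing the eigenvalue of $\alpha_i$ as $\frac{1}{2}M_i^2$ with $M_i>0$ gives $m(\alpha_i,\alpha_i)=k(L\alpha_i,\alpha_i)=\frac{1}{2}M_i^2$ and $m(\alpha_i,\alpha_j)=0$ for $i\neq j$. On $\mathfrak{h}$, where $L=0$, any $k$-orthonormal basis $\alpha_{d+1},\ldots,\alpha_r$ is automatically an eigenbasis with eigenvalue $0$, so $M_i=0$ there. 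Since the two subspaces are $k$-orthogonal and $m$ has no mixed block, the combined family $\alpha_1,\ldots,\alpha_r$ is a $k$-orthonormal basis of $\mathfrak{g}$ diagonalizing $m$ with the asserted signs of the diagonal entries.

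I do not expect a genuine obstacle here: the only point requiring care is the observation that the mixed block $m(\mathfrak{h}^\perp,\mathfrak{h})$ vanishes, since this is precisely what allows the diagonalization to respect the decomposition into broken and unbroken generators, rather than producing some basis that mixes $\mathfrak{h}$ and $\mathfrak{h}^\perp$. Once that is in place, everything is a direct application of the spectral theorem together with the positivity and vanishing statements already established for $m$.
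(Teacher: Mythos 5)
Your proposal is correct and follows essentially the same route as the paper, which derives the proposition from the observations stated immediately before it: $m$ vanishes on $\mathfrak{h}$ (indeed the whole mixed block vanishes since $B\cdot v_0=0$ for $B\in\mathfrak{h}$), is positive definite on $\mathfrak{h}^\perp$, and is then diagonalized as a symmetric form. Your explicit passage through the $k$-self-adjoint operator $L$ and the spectral theorem merely spells out the step the paper compresses into ``since $m$ is a symmetric form we can diagonalize it.''
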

If $A_\mu\colon M\rightarrow \mathfrak{g}$ is a gauge field, then we can write 
\begin{equation*}
A_\mu=\sum_{i=1}^r A_\mu^i\alpha_i,
\end{equation*} 
where $A_\mu^i$ are the {\bf broken and unbroken gauge bosons}, respectively. The numbers $M_i$ are the {\bf masses of the gauge bosons}. The masses $M_i$ are proportional to the norm $||v_0||$ of the vacuum value of the Higgs field. The masses $M_i$ also depend on the choice of the scalar product on $\mathfrak{g}$ via the choice of orthonormal basis $\{\alpha_i\}$ and thus on the coupling constants.

\subsection{The combined Lagrangian}
For a gauge field $A_\mu$ we define as before the covariant derivative 
\begin{equation*}
\nabla_\mu^A=\partial_\mu+A_\mu
\end{equation*}
and the curvature (or field strength)
\begin{equation*}
F_{\mu\nu}=dA_{\mu\nu}+[A_\mu,A_\nu]=\partial_\mu A_\nu-\partial_\nu A_\mu+[A_\mu,A_\nu].
\end{equation*}
We can also express the field strength in our chosen orthonormal basis for the Lie algebra $\mathfrak{g}$:
\begin{equation*}
F_{\mu\nu}=\sum_{i=1}^rF_{\mu\nu}^i\alpha_i.
\end{equation*}
The total Lagrangian for the Higgs field $\Phi$ and the gauge field $A_\mu$ is now:
\begin{equation*}
\mathcal{L}=\left(\nabla^{A\mu}\Phi\right)^\dagger\left(\nabla^A_\mu\Phi\right)-V(\Phi)-\frac{1}{4}F^{\mu\nu}_iF_{\mu\nu}^i.
\end{equation*}
Here we assumed the standard inner product $\langle v,w\rangle=v^\dagger w$ on $E=\mathbb{C}^n$. There is a summation over Lie algebra indices $i$ in the last term.

We write $\Phi=v_0+\Delta\Phi$ with the shifted Higgs field $\Delta\Phi$ as before. We want to determine the terms up to second order in the fields $\Delta\Phi$ and $A_\mu$ in the Lagrangian $\mathcal{L}$, i.e. the terms corresponding to "{}free{}" fields. Higher order terms contain interactions between these fields. A direct calculation shows:
\begin{thm}
Up to terms of second order the Lagrangian $\mathcal{L}$ is given by\begin{align*}
\mathcal{L}\approx&\left(\partial^\mu\Delta\Phi\right)^\dagger\left(\partial_\mu\Delta\Phi\right)+2\mathrm{Re}\left(\partial^\mu\Delta\Phi\right)^\dagger\left(A_\mu\cdot v_0\right)+\left(A^\mu\cdot v_0\right)^\dagger\left(A_\mu\cdot v_0\right)\\
&-V(\Phi)-\frac{1}{4}\left(dA_i^{\mu\nu}\right)\left(dA^i_{\mu\nu}\right).
\end{align*}
\end{thm}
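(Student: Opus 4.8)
The plan is to substitute $\Phi=v_0+\Delta\Phi$ and $\nabla^A_\mu=\partial_\mu+A_\mu$ into each of the three summands of $\mathcal{L}$ and expand, keeping careful track of the order of every term in the two dynamical fields $\Delta\Phi$ and $A_\mu$. The governing bookkeeping convention is that the vacuum vector $v_0$ is a fixed constant (order zero), whereas $\Delta\Phi$ and $A_\mu$ each count as order one; a product of $k$ such factors is order $k$, and we discard everything of order three or higher.

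First I would treat the covariant-derivative term. Since $v_0$ is constant we have $\partial_\mu v_0=0$, hence
\[
\nabla^A_\mu\Phi=\partial_\mu\Delta\Phi+A_\mu\cdot v_0+A_\mu\cdot\Delta\Phi,
\]
whose order-one part is $\partial_\mu\Delta\Phi+A_\mu\cdot v_0$ and whose order-two part is $A_\mu\cdot\Delta\Phi$. Forming $(\nabla^{A\mu}\Phi)^\dagger(\nabla^A_\mu\Phi)$ and retaining only contributions that are at most second order amounts to multiplying the order-one part by itself,
\[
(\partial^\mu\Delta\Phi+A^\mu\cdot v_0)^\dagger(\partial_\mu\Delta\Phi+A_\mu\cdot v_0),
\]
since any factor of $A_\mu\cdot\Delta\Phi$ raises the order to at least three. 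Expanding produces the pure kinetic term $(\partial^\mu\Delta\Phi)^\dagger(\partial_\mu\Delta\Phi)$, the gauge-boson mass term $(A^\mu\cdot v_0)^\dagger(A_\mu\cdot v_0)$, and two cross terms. The latter are complex conjugates of one another (by the defining property $w^\dagger v=\overline{v^\dagger w}$ of the Hermitian form, the metric being real), so their sum is $2\mathrm{Re}\,(\partial^\mu\Delta\Phi)^\dagger(A_\mu\cdot v_0)$. This reproduces the first line of the asserted expression.

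Next I would handle the Yang-Mills term. Writing $F_{\mu\nu}=dA_{\mu\nu}+[A_\mu,A_\nu]$ with $dA_{\mu\nu}=\partial_\mu A_\nu-\partial_\nu A_\mu$, the linear piece $dA_{\mu\nu}$ is order one while the commutator is order two. Since $F^{\mu\nu}_iF_{\mu\nu}^i$ is quadratic in $F$, its second-order part comes solely from $(dA^{\mu\nu}_i)(dA_{\mu\nu}^i)$; the cross term involving one commutator is order three and the commutator-squared term is order four. Thus $-\frac{1}{4}F^{\mu\nu}_iF_{\mu\nu}^i\approx-\frac{1}{4}(dA^{\mu\nu}_i)(dA_{\mu\nu}^i)$ to the required order, while the potential $-V(\Phi)$ is simply carried along unexpanded. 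Summing the three contributions yields the stated formula.

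The calculation has no genuine obstacle — the paper rightly calls it direct — but the single point demanding care is the order-counting itself. The crucial, slightly counterintuitive observation is that $A_\mu\cdot v_0$ is \emph{first} order, because $v_0$ is a constant rather than a field, so it must sit inside the order-one part alongside $\partial_\mu\Delta\Phi$. This is exactly what generates both the gauge-boson mass term $(A^\mu\cdot v_0)^\dagger(A_\mu\cdot v_0)$ and the cross term $2\mathrm{Re}\,(\partial^\mu\Delta\Phi)^\dagger(A_\mu\cdot v_0)$ responsible for the characteristic $\Delta\Phi$–$A_\mu$ mixing of the Higgs mechanism; miscounting it would instead relegate these terms to the interaction part and lose the mass generation entirely.
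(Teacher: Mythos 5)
Your proposal is correct and follows exactly the ``direct calculation'' the paper alludes to: substitute $\Phi=v_0+\Delta\Phi$, note that $\nabla^A_\mu\Phi=\partial_\mu\Delta\Phi+A_\mu\cdot v_0+A_\mu\cdot\Delta\Phi$ has no zeroth-order piece so the quadratic part of the kinetic term is the square of its first-order part, combine the conjugate cross terms into $2\mathrm{Re}$, and drop the commutator contributions to $F_{\mu\nu}$ as higher order. The order-counting remark about $A_\mu\cdot v_0$ being first order is precisely the key point, and everything checks out.
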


\subsection{Simplifying the Lagrangian}
We want to simplify each of the summands. {\em It is very useful now to assume that the Higgs field $\Phi$ is given in unitary gauge.} By this we mean that all Goldstone bosons of $\Phi$ vanish on all of spacetime $M$ identically. We can always do a transformation of the Higgs field into unitary gauge (if such a gauge exists) because the Lagrangian $\mathcal{L}$ is gauge invariant.
\begin{lem}If $\Phi$ is in unitary gauge, then 
\begin{equation*}
\mathrm{Re}\left(\partial^\mu\Delta\Phi\right)^\dagger\left(A_\mu\cdot v_0\right)=0.
\end{equation*}
\end{lem}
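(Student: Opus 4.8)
The plan is to reduce the claim to the orthogonality of the two summands of the splitting $E=W_{v_0}\oplus W_{v_0}^\perp$. Recall that this splitting is orthogonal with respect to the real inner product $\mathrm{Re}\langle\cdot\,,\cdot\rangle$, and that $(\partial^\mu\Delta\Phi)^\dagger(A_\mu\cdot v_0)=\langle\partial^\mu\Delta\Phi,A_\mu\cdot v_0\rangle$ by definition of the Hermitian form $\langle v,w\rangle=v^\dagger w$. So it suffices to show, for each fixed index $\mu$, that $\partial^\mu\Delta\Phi$ lies in $W_{v_0}^\perp$ while $A_\mu\cdot v_0$ lies in $W_{v_0}$; the summands then pair to zero, and summing over $\mu$ gives the result.

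First I would use the unitary gauge hypothesis. Since $\Phi$ is in unitary gauge, all Goldstone bosons $\xi_i$ vanish identically on $M$, so the expansion of the shifted field collapses to
\begin{equation*}
\Delta\Phi=\frac{1}{\sqrt{2}}\sum_{j=1}^{2n-d}\eta_j f_j.
\end{equation*}
The key observation is that the vectors $f_j$ are fixed (independent of the spacetime point), so only the real scalar fields $\eta_j$ are differentiated. Hence $\partial^\mu\Delta\Phi=\frac{1}{\sqrt{2}}\sum_j(\partial^\mu\eta_j)f_j$ is, at every point and for each $\mu$, a real linear combination of the $f_j$ and therefore an element of $W_{v_0}^\perp$.

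Next I would recall that $A_\mu(x)\in\mathfrak{g}$ acts on $v_0$ through the induced Lie algebra representation, and that $\mathfrak{g}\cdot v_0=W_{v_0}=T_{v_0}(G\cdot v_0)$ is exactly the tangent space to the orbit (this identification was already established when the splitting was introduced). Thus $A_\mu\cdot v_0\in W_{v_0}$. Combining the two observations, each term $\mathrm{Re}\langle\partial^\mu\Delta\Phi,A_\mu\cdot v_0\rangle$ pairs a vector of $W_{v_0}^\perp$ against one of $W_{v_0}$ and so vanishes by orthogonality.

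I do not expect a genuine obstacle here; the statement is essentially a bookkeeping consequence of the unitary gauge condition together with the orthogonal decomposition. The only point requiring care is the remark that $\partial^\mu$ acts solely on the scalar coefficients $\eta_j$ and leaves the fixed basis vectors $f_j$ untouched, so that the derivative remains in $W_{v_0}^\perp$; one should also confirm that the real inner product appearing in the expression is precisely the one $\mathrm{Re}\langle\cdot\,,\cdot\rangle$ defining the complement $W_{v_0}^\perp$, which it is.
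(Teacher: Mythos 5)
Your proposal is correct and follows essentially the same argument as the paper: $A_\mu\cdot v_0$ lies in $W_{v_0}=\mathfrak{g}\cdot v_0$, while the unitary gauge condition forces $\Delta\Phi$, and hence $\partial^\mu\Delta\Phi$ (since only the scalar coefficients $\eta_j$ are differentiated), to lie in $W_{v_0}^\perp$, so the real pairing vanishes by orthogonality. The paper's proof is just a more compressed version of exactly this reasoning.
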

\begin{proof}Note that $A_\mu\cdot v_0$ is tangential to the orbit of $v_0$ under $G$, while the shifted Higgs field $\Delta\Phi$ and therefore its derivative is everywhere on $M$ orthogonal to the orbit. This implies the claim.
\end{proof}
\begin{lem}If $\Phi$ is in unitary gauge, then 
\begin{equation*}
\left(\partial^\mu\Delta\Phi\right)^\dagger\left(\partial_\mu\Delta\Phi\right)=\frac{1}{2}\sum_{j=1}^{2n-d}\left(\partial^\mu\eta_j\right)\left(\partial_\mu\eta_j\right),
\end{equation*}
where $\eta_j$ are the Higgs bosons.
\end{lem}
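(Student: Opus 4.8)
The plan is to substitute the unitary-gauge form of $\Delta\Phi$ directly and keep track of which terms survive. Since $\Phi$ is in unitary gauge, all Goldstone bosons vanish identically on $M$, so the decomposition of the shifted Higgs field collapses to
\begin{equation*}
\Delta\Phi=\frac{1}{\sqrt{2}}\sum_{j=1}^{2n-d}\eta_jf_j.
\end{equation*}
The vectors $f_j$ are fixed (they are part of the chosen orthonormal eigenbasis of the Hessian at $v_0$ and carry no spacetime dependence), while the real scalar fields $\eta_j$ carry all the $x$-dependence, so differentiating gives
\begin{equation*}
\partial_\mu\Delta\Phi=\frac{1}{\sqrt{2}}\sum_{j=1}^{2n-d}(\partial_\mu\eta_j)f_j.
\end{equation*}

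Next I would insert this into $\left(\partial^\mu\Delta\Phi\right)^\dagger\left(\partial_\mu\Delta\Phi\right)$ and expand, obtaining $\frac{1}{2}\sum_{j,k}(\partial^\mu\eta_j)(\partial_\mu\eta_k)\langle f_j,f_k\rangle$, where the scalars can be pulled out of the Hermitian form because the $\eta_j$ are real. The one point requiring care is that the $f_j$ are orthonormal only with respect to the \emph{real} inner product $\mathrm{Re}\langle\cdot\,,\cdot\rangle$, so the full Hermitian products $\langle f_j,f_k\rangle$ need not be real for $j\neq k$. Here I would exploit that the coefficient $(\partial^\mu\eta_j)(\partial_\mu\eta_k)$ is symmetric under $j\leftrightarrow k$ (a product of real fields contracted with the symmetric spacetime metric): a symmetric coefficient contracted against a Hermitian-symmetric form $\langle f_j,f_k\rangle=\overline{\langle f_k,f_j\rangle}$ only sees the real part $\mathrm{Re}\langle f_j,f_k\rangle$. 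Equivalently, one checks directly that $\left(\partial^\mu\Delta\Phi\right)^\dagger\left(\partial_\mu\Delta\Phi\right)$ is real from the outset, so $\langle f_j,f_k\rangle$ may be replaced by its real part without loss.

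Finally, since $e_1,\ldots,e_d,f_1,\ldots,f_{2n-d}$ form a real orthonormal basis of $E$, we have $\mathrm{Re}\langle f_j,f_k\rangle=\delta_{jk}$, so the double sum collapses to the diagonal and yields
\begin{equation*}
\left(\partial^\mu\Delta\Phi\right)^\dagger\left(\partial_\mu\Delta\Phi\right)=\frac{1}{2}\sum_{j=1}^{2n-d}(\partial^\mu\eta_j)(\partial_\mu\eta_j),
\end{equation*}
the factor $\tfrac12$ coming from squaring the $1/\sqrt2$ normalization. The genuinely delicate step—and the one I would state explicitly rather than gloss over—is the passage from the Hermitian inner product to its real part; once that is justified, the remainder is routine substitution and orthonormality.
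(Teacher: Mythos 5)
Your proof is correct and takes essentially the same route as the paper, whose entire argument is to substitute the unitary-gauge form $\Delta\Phi=\frac{1}{\sqrt{2}}\sum_{j}\eta_jf_j$ and declare the result immediate. The subtlety you single out --- that the $f_j$ are orthonormal only for $\mathrm{Re}\langle\cdot\,,\cdot\rangle$, so one must invoke the symmetry of the coefficients $(\partial^\mu\eta_j)(\partial_\mu\eta_k)$ to replace $\langle f_j,f_k\rangle$ by $\mathrm{Re}\langle f_j,f_k\rangle=\delta_{jk}$ --- is a genuine gap in the paper's one-line proof, and your treatment of it is correct.
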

\begin{proof}
Follows immediately, because under our assumption that the Goldstone bosons vanish we have 
\begin{equation*}
\Delta\Phi=\frac{1}{\sqrt{2}}\sum_{j=1}^{2n-d}\eta_jf_j.
\end{equation*}
\end{proof}
\begin{lem}Up to second order in the Higgs bosons we have 
\begin{equation*}
V(\Phi)\approx V(v_0)+\frac{1}{2}\sum_{j=1}^{2n-d}m_{f_j}^2\eta_j^2,
\end{equation*}
where $m_{f_j}$ is the mass of the $j$-th Higgs boson.
\end{lem}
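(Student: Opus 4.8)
The plan is to reduce this directly to the second-order Taylor expansion of $V$ about $v_0$ already established in Theorem~\ref{thm potential up to second order}, now specialized to the unitary gauge. First I would write the Taylor formula for the increment $\Delta\Phi=\Phi-v_0$. Since $v_0$ is a local minimum of $V$ the gradient $\mathrm{grad}\,V$ vanishes at $v_0$, so the linear term drops out and
\begin{equation*}
V(\Phi)=V(v_0)+\tfrac12\,\mathrm{Re}\langle\mathrm{Hess}(V)_{v_0}(\Delta\Phi),\Delta\Phi\rangle+O(|\Delta\Phi|^3),
\end{equation*}
where $\mathrm{Re}\langle\cdot\,,\cdot\rangle$ is the positive definite real inner product on $E$ with respect to which the vectors $e_i,f_j$ form an orthonormal eigenbasis of the Hessian.

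Next I would invoke the preceding Lemma: in unitary gauge all Goldstone bosons vanish, so the shifted field reduces to $\Delta\Phi=\frac{1}{\sqrt2}\sum_{j=1}^{2n-d}\eta_jf_j$, involving only the Higgs-boson components. Substituting this into the quadratic form and using that $\mathrm{Hess}(V)_{v_0}(f_j)=2m_{f_j}^2f_j$ together with the orthonormality $\mathrm{Re}\langle f_j,f_k\rangle=\delta_{jk}$, all cross terms vanish and a short computation yields $\frac12\sum_j m_{f_j}^2\eta_j^2$, which is the claimed expansion.

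I expect no genuine obstacle here: the statement is essentially Theorem~\ref{thm potential up to second order} once more, the only additional input being that unitary gauge removes the $\xi_i$ --- which in any case span the kernel of the Hessian and thus contribute nothing at second order. The single point to handle with care is the bookkeeping of the normalization: the factor $\frac{1}{\sqrt2}$ in the expansion of $\Delta\Phi$ is chosen precisely so that its square combines with the eigenvalue $2m_{f_j}^2$ to produce the clean coefficient $\frac12 m_{f_j}^2$, matching the standard Klein--Gordon mass term.
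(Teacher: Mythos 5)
Your proposal is correct and follows the same route as the paper, which simply refers back to Theorem~\ref{thm potential up to second order}; you merely spell out the Taylor expansion and the eigenbasis computation that the theorem's statement leaves implicit. Your normalization bookkeeping ($\tfrac12\cdot\tfrac12\cdot 2m_{f_j}^2=\tfrac12 m_{f_j}^2$) and your observation that the Goldstone directions lie in the kernel of the Hessian and so drop out at second order regardless of gauge are both accurate.
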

\begin{proof}
The formula can be found in Theorem \ref{thm potential up to second order}
\end{proof}
\begin{lem}We have 
\begin{equation*}
\left(A^\mu\cdot v_0\right)^\dagger\left(A_\mu\cdot v_0\right)=\frac{1}{2}\sum_{i=1}^dM_i^2A_i^\mu A^i_\mu,
\end{equation*}
where $M_i$ are the masses defined above.
\end{lem}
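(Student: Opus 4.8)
The plan is to expand everything in the orthonormal basis $\alpha_1,\ldots,\alpha_r$ of $\mathfrak{g}$ furnished by the preceding proposition and to reduce the Hermitian pairing to the bilinear form $m$ of Definition \ref{defn bilinear m}, whose values on this basis have already been computed.

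First I would write $A_\mu=\sum_{i=1}^r A_\mu^i\alpha_i$, so that $A_\mu\cdot v_0=\sum_{i=1}^r A_\mu^i\,(\alpha_i\cdot v_0)$, where the components $A_\mu^i\colon M\to\mathbb{R}$ are real. Substituting this into $(A^\mu\cdot v_0)^\dagger(A_\mu\cdot v_0)=\langle A^\mu\cdot v_0,\,A_\mu\cdot v_0\rangle$ and using that the $A_\mu^i$ are real, so that they pass through the antilinear slot of $\langle\cdot\,,\cdot\rangle$ unconjugated, gives
\begin{equation*}
(A^\mu\cdot v_0)^\dagger(A_\mu\cdot v_0)=\sum_{i,j=1}^r\left(\sum_\mu A^{\mu}_i A_\mu^j\right)\langle\alpha_i\cdot v_0,\alpha_j\cdot v_0\rangle ,
\end{equation*}
with the spacetime index $\mu$ contracted via the metric.

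The step that needs care is reconciling the full Hermitian product $\dagger$ appearing here with the mass form $m$, which only uses the real part. The point is that the coefficient $\sum_\mu A^{\mu}_i A_\mu^j$ is real and symmetric under $i\leftrightarrow j$, whereas $\langle\alpha_i\cdot v_0,\alpha_j\cdot v_0\rangle$ splits into a symmetric real part and an antisymmetric imaginary part (because $\langle v,w\rangle=\overline{\langle w,v\rangle}$). Contracting a symmetric tensor with the antisymmetric part yields zero, so only $\mathrm{Re}\langle\alpha_i\cdot v_0,\alpha_j\cdot v_0\rangle=m(\alpha_i,\alpha_j)$ survives. Equivalently, for each fixed $\mu$ the quantity $(A_\mu\cdot v_0)^\dagger(A_\mu\cdot v_0)=\|A_\mu\cdot v_0\|^2$ is manifestly real and already equals $m(A_\mu,A_\mu)$. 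This is the only genuinely non-bookkeeping observation in the argument.

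Finally I would invoke the proposition, which states that in this basis $m$ is diagonal with $m(\alpha_i,\alpha_i)=\tfrac12 M_i^2$ and $m(\alpha_i,\alpha_j)=0$ for $i\neq j$. Replacing $\mathrm{Re}\langle\alpha_i\cdot v_0,\alpha_j\cdot v_0\rangle$ by $\tfrac12 M_i^2\delta_{ij}$ collapses the double sum to
\begin{equation*}
(A^\mu\cdot v_0)^\dagger(A_\mu\cdot v_0)=\frac12\sum_{i=1}^r M_i^2\,A^\mu_i A_\mu^i=\frac12\sum_{i=1}^d M_i^2\,A^\mu_i A_\mu^i ,
\end{equation*}
where the last equality uses that $M_i=0$ for the unbroken generators $\alpha_{d+1},\ldots,\alpha_r$. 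This is the claimed identity.
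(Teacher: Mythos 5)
Your proposal is correct and follows essentially the same route as the paper: the paper's proof simply asserts $\left(A^\mu\cdot v_0\right)^\dagger\left(A_\mu\cdot v_0\right)=m(A^\mu,A_\mu)$ and then invokes the diagonalizing basis, which is exactly your argument with the reality/symmetry bookkeeping spelled out. Your explicit observation that the antisymmetric imaginary part of $\langle\alpha_i\cdot v_0,\alpha_j\cdot v_0\rangle$ drops out against the symmetric coefficient tensor is a detail the paper leaves implicit, and it is handled correctly.
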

\begin{proof}
We have 
\begin{equation*}
\left(A^\mu\cdot v_0\right)^\dagger\left(A_\mu\cdot v_0\right)=m(A^\mu,A_\mu)
\end{equation*}
for the bilinear form $m$ on $\mathfrak{g}$ introduced in the beginning. This implies the claim by our choice of basis $\alpha_i$.
\end{proof}
We now collect all terms and get:
\begin{thm}\label{thm combined Lag simply} If the Higgs field $\Phi$ is in unitary gauge, then up to second order in the shifted Higgs field and the gauge field the Lagrangian is given by 
\begin{align*}
\mathcal{L}&\approx\frac{1}{2}\sum_{j=1}^{2n-d}\left(\partial^\mu\eta_j\right)\left(\partial_\mu\eta_j\right)-\frac{1}{2}\sum_{j=1}^{2n-d}m_{f_j}^2\eta_j^2\\
&\,-\frac{1}{4}\sum_{i=1}^d\left(dA_i^{\mu\nu}\right)\left(dA^i_{\mu\nu}\right)+\frac{1}{2}\sum_{i=1}^dM_i^2A_i^\mu A^i_\mu\\
&\,-\frac{1}{4}\sum_{i=d+1}^r\left(dA_i^{\mu\nu}\right)\left(dA^i_{\mu\nu}\right).
\end{align*}
\end{thm}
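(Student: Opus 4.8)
The plan is to assemble the statement directly from the second-order expansion of the total Lagrangian established in the preceding theorem, together with the four simplification lemmas just proved; essentially no new computation is required beyond reorganizing terms and checking that the truncation at second order is consistent. I would take the five summands of that expansion—the Higgs kinetic term $\left(\partial^\mu\Delta\Phi\right)^\dagger\left(\partial_\mu\Delta\Phi\right)$, the cross term $2\mathrm{Re}\left(\partial^\mu\Delta\Phi\right)^\dagger\left(A_\mu\cdot v_0\right)$, the gauge mass term $\left(A^\mu\cdot v_0\right)^\dagger\left(A_\mu\cdot v_0\right)$, the potential $-V(\Phi)$, and the Yang-Mills term $-\frac14\left(dA_i^{\mu\nu}\right)\left(dA^i_{\mu\nu}\right)$—and treat each in turn.

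The hypothesis that $\Phi$ is in unitary gauge enters through the first lemma: the cross term vanishes identically, since $A_\mu\cdot v_0$ is tangent to the orbit $G\cdot v_0$ while $\Delta\Phi$, and hence $\partial_\mu\Delta\Phi$, is everywhere orthogonal to it. This removes all mixing between the Higgs and gauge fields at second order, which is exactly what allows the two sectors to decouple in the final formula.

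Next I would rewrite the remaining matter terms. By the second lemma, again using unitary gauge so that $\Delta\Phi=\frac{1}{\sqrt2}\sum_j\eta_j f_j$ contains only the Higgs-boson components, the kinetic term becomes $\frac12\sum_{j=1}^{2n-d}\left(\partial^\mu\eta_j\right)\left(\partial_\mu\eta_j\right)$. By the third lemma, which is the Taylor expansion of Theorem \ref{thm potential up to second order}, the potential equals $V(v_0)+\frac12\sum_{j=1}^{2n-d}m_{f_j}^2\eta_j^2$ up to second order, and I would discard the additive constant $V(v_0)$ as physically irrelevant. For the gauge mass term I would invoke the fourth lemma, identifying $\left(A^\mu\cdot v_0\right)^\dagger\left(A_\mu\cdot v_0\right)$ with $m(A^\mu,A_\mu)$; the diagonalizing choice of basis $\alpha_i$ then yields $\frac12\sum_{i=1}^d M_i^2 A_i^\mu A^i_\mu$, where only the broken generators $\alpha_1,\ldots,\alpha_d$ contribute because $m$ vanishes on $\mathfrak h$.

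Finally I would split the Yang-Mills term: at second order the commutator $[A_\mu,A_\nu]$ in the curvature is negligible, so only $-\frac14\sum_{i=1}^r\left(dA_i^{\mu\nu}\right)\left(dA^i_{\mu\nu}\right)$ survives, and separating the sum into broken indices $i=1,\ldots,d$ and unbroken indices $i=d+1,\ldots,r$ produces the last two displayed lines. Collecting all contributions gives the stated formula. The one point requiring care—rather than any genuine obstacle—is the consistency of the truncation: I must verify that every discarded term, namely the cubic and quartic parts of $V$, the commutator in $F_{\mu\nu}$, and the neglected Goldstone contributions, is genuinely of order three or higher in $(\Delta\Phi,A_\mu)$, so that combining the four lemmas loses nothing at second order.
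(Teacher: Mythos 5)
Your proposal is correct and follows exactly the route the paper takes: the paper's "proof" is simply the sentence "We now collect all terms and get," i.e.\ it substitutes the four preceding lemmas into the second-order expansion of the total Lagrangian, drops the constant $V(v_0)$, and splits the Yang--Mills sum into broken and unbroken indices. Your additional remark about verifying that all discarded terms (cubic/quartic parts of $V$, the commutator in $F_{\mu\nu}$) are genuinely of order three or higher is a sensible consistency check that the paper leaves implicit.
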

Here we removed the irrelevant constant $V(v_0)$. This Lagrangian has the following interpretation:
\begin{itemize}
\item The two terms in the first line are the Klein-Gordon Lagrangian for $2n-d$ real scalar Higgs bosons $\eta_j$ of mass $m_{f_j}$.
\item The two terms in the second line are the Lagrangian for $d$ broken gauge bosons $A_\mu^1,\ldots,A_\mu^d$ of mass $M_i$.
\item The term in the third line is the Lagrangian for $r-d$ unbroken massless gauge bosons $A_\mu^{d+1},\ldots,A_\mu^r$.
\end{itemize}
This is the {\bf Brout-Englert-Higgs mechanism} of creating in a gauge invariant way masses for gauge bosons.

\section{The $SU(2)\times U(1)$-theory of electroweak interactions}\label{section electroweak}

In this section we want to discuss the Higgs mechanism in the special case of the {\bf electroweak interaction}. Our gauge group is $G=SU(2)\times U(1)$. We choose the scalar product on the Lie algebra $\mathfrak{g}=su(2)\oplus u(1)$ in such a way that the following elements form an orthonormal basis: 
\begin{align*}
\beta_l&=g\frac{i\sigma_l}{2}\in su(2)\quad(l=1,2,3)\\
\beta_4&=g'\frac{i}{2}\in u(1),
\end{align*}
where $\sigma_l$ are the Pauli matrices
\begin{equation*}
\sigma_1=\left(\begin{array}{cc}0&1\\1&0\end{array}\right),\quad\sigma_2=\left(\begin{array}{cc}0&-i\\i&0\end{array}\right),\quad\sigma_3=\left(\begin{array}{cc}1&0\\0&-1\end{array}\right)
\end{equation*} 
and the positive real numbers $g$ and $g'$ are the coupling constants corresponding to $SU(2)$ and $U(1)$. 

The vector space $E$ in which the Higgs field takes value is $E=\mathbb{C}^2$. We have an action of $G$ on $E$ where the generators $\beta_i\in\mathfrak{g}$ act on a vector 
\begin{equation*}
v=\left(\begin{array}{c}v_1\\v_2\end{array}\right)\in\mathbb{C}^2
\end{equation*}
as 
\begin{align*}
\beta_l\cdot v&=g\frac{i\sigma_l}{2}\left(\begin{array}{c}v_1\\v_2\end{array}\right)\quad(l=1,2,3)\\
\beta_4\cdot v&=g'\frac{i}{2}\left(\begin{array}{c}v_1\\v_2\end{array}\right).
\end{align*}
Hence the action of $SU(2)$ on $E$ is the standard $2$-dimensional representation. The vacuum vector is given by 
\begin{equation*}
v_0=\left(\begin{array}{c}0\\\sqrt{\frac{\mu}{2\lambda}}\end{array}\right)=\left(\begin{array}{c}0\\||v_0||\end{array}\right)\in\mathbb{C}^2,
\end{equation*}
where $\mu$ and $\lambda$ are the parameters of the Higgs field potential that can be found in Example \ref{ex Higgs field potential}.

\subsection{The gauge bosons}
A direct calculation shows that the symmetric, bilinear form 
\begin{equation*}
m\colon\mathfrak{g}\times\mathfrak{g}\longrightarrow\mathbb{R}
\end{equation*}
(see Definition \ref{defn bilinear m}) is given in the basis $\beta_i$ by 
\begin{equation*}
m(\beta_i,\beta_j)=\frac{||v_0||^2}{4}\left(\begin{array}{cccc}g^2&0&0&0\\0&g^2&0&0\\0&0&g^2&-gg'\\0&0&-gg'&g'^2\end{array}\right).
\end{equation*}
If we define a new orthonormal basis
\begin{align*}
\alpha_1&=\beta_1\\
\alpha_2&=\beta_2\\
\alpha_3&=\frac{1}{\sqrt{g^2+g'^2}}(g\beta_3-g'\beta_4)\\
\alpha_4&=\frac{1}{\sqrt{g^2+g'^2}}(g'\beta_3+g\beta_4),
\end{align*}
then the bilinear form $m$ becomes diagonal:
\begin{equation*}
m(\alpha_i,\alpha_j)=\frac{||v_0||^2}{4}\left(\begin{array}{cccc}g^2&0&0&0\\0&g^2&0&0\\0&0&g^2+g'^2&0\\0&0&0&0\end{array}\right).
\end{equation*}
We see that the subalgebra of the {\bf stabilizer group} is given by 
\begin{equation*}
\mathfrak{h}=\mathrm{span}(\alpha_4).
\end{equation*}
Indeed, $\alpha_4$ acts on the vacuum vector as 
\begin{equation*}
\alpha_4\cdot v_0=\frac{1}{\sqrt{g^2+g'^2}}\left(g'g\frac{i\sigma_3}{2}+gg'\frac{i}{2}\right)\left(\begin{array}{c}0\\||v_0||\end{array}\right)=0.
\end{equation*}
The subspace of {\bf broken generators} is given by 
\begin{equation*}
\mathfrak{h}^\perp=\mathrm{span}(\alpha_1,\alpha_2,\alpha_3).
\end{equation*}
We have 
\begin{itemize}
\item {\bf three massive gauge bosons}, two gauge bosons of mass $\frac{1}{\sqrt{2}}||v_0||g$ and one gauge boson of mass $\frac{1}{\sqrt{2}}||v_0||\sqrt{g^2+g'^2}$.
\item We also have {\bf one massless gauge boson}.
\end{itemize}

\subsection{The physics notation}
In physics the following notation is used: We set 
\begin{equation*}
\tan\theta_W=\frac{g'}{g},
\end{equation*}
where $\theta_W$ is the {\bf Weinberg angle}. Then 
\begin{align*}
\alpha_3&=\cos\theta_W\beta_3-\sin\theta_W\beta_4\\
\alpha_4&=\sin\theta_W\beta_3+\cos\theta_W\beta_4.
\end{align*}
Hence the basis $(\alpha_3,\alpha_4)$ is rotated by an angle $\theta_W$ with respect to $(\beta_3,\beta_4)$ (clockwise in our situation). We can then decompose our gauge field 
\begin{equation*}
A_\mu=\sum_{i=1}^4A_\mu^i\beta_i
\end{equation*}
as
\begin{equation*}
A_\mu=W^+_\mu\frac{1}{\sqrt{2}}(\alpha_1-i\alpha_2)+W^-_\mu\frac{1}{\sqrt{2}}(\alpha_1+i\alpha_2)+Z^0_\mu\alpha_3+\gamma_\mu\alpha_4,
\end{equation*}
where 
\begin{itemize}
\item $W^\pm_\mu=\frac{1}{\sqrt{2}}(A^1_\mu\pm iA^2_\mu)$ are gauge bosons of mass 
\begin{equation*}
m_W=\frac{1}{\sqrt{2}}||v_0||g
\end{equation*}
\item $Z^0_\mu=\cos\theta_WA^3_\mu-\sin\theta_WA^4_\mu$ is a gauge boson of mass 
\begin{equation*}
m_Z=\frac{1}{\sqrt{2}}||v_0||\sqrt{g^2+g'^2}
\end{equation*} 
\item $\gamma_\mu=\sin\theta_WA^3_\mu+\cos\theta_WA^4_\mu$ is the massless photon. 
\end{itemize}
We have 
\begin{equation*}
\cos\theta_W=\frac{m_W}{m_Z}.
\end{equation*}
We already calculated the mass of the Higgs boson in Example \ref{ex Higgs mass}.

\subsection{Charges}
\begin{defn}Let $\Psi$ be some multiplet of matter fields in the electroweak theory, taking value in a space $S\otimes V$, where the complex vector space $V$ of dimension $r$ is the multiplet space and we have some unitary representation of the gauge group $G=SU(2)\times U(1)$ on $V$. The generators $\beta_l$ of $su(2)$ act as 
\begin{equation*}
\beta_l\rightarrow igT_l\quad(l=1,2,3)
\end{equation*}
and the generator $\beta_4$ of $u(1)$ acts as 
\begin{equation*}
\beta_4\rightarrow ig'\frac{Y}{2},
\end{equation*}
where $T_l$ and $Y$ are certain {\em Hermitian} operators on $V$. The eigenvalues of $T_3$ are called {\bf weak isospin} and of $Y$ {\bf weak hypercharge}. Since $T_3$ and $Y$ commute in any representation of $SU(2)\times U(1)$ we can find an orthonormal basis of $V$ of common eigenvectors for both operators. If we identify $V$ with $\mathbb{C}^r$ via this basis, then both $T_3$ and $Y$ act as diagonal matrices whose entries are the charges of the corresponding multiplet component fields $\psi_i$.
\end{defn}
\begin{rem}The eigenvalues of the weak isospin operator $T_3$ and thus the charges are determined by the {\bf weights} of the representation. The weights are elements in the dual space of the {\bf Cartan subalgebra} in $su(2)\otimes\mathbb{C}$, spanned by $i\sigma_3$. See \cite{Ziller} for more details.
\end{rem}
If we set
\begin{align*}
T_+&=(T_1-iT_2)\\
T_-&=(T_1+iT_2)\\
Q&=T_3+\frac{Y}{2},
\end{align*}
then a general gauge field $A_\mu\colon M\rightarrow \mathfrak{g}$ acts on the multiplets with value in $S\otimes V$ as 
\begin{align*}
A_\mu\rightarrow&\frac{ig}{\sqrt{2}}(W^+_\mu T_++W^-_\mu T_-)\\
&\,+Z^0_\mu\frac{ig^2T_3-ig'^2\frac{Y}{2}}{\sqrt{g^2+g'^2}}\\
&\,+\gamma_\mu\frac{igg'}{\sqrt{g^2+g'^2}}Q.
\end{align*} 
It follows that the {\bf elementary electric charge} is given by 
\begin{equation*}
e=\frac{gg'}{\sqrt{g^2+g'^2}}=g\sin\theta_W.
\end{equation*}
For example, on the Higgs field 
\begin{equation*}
\Phi=\left(\begin{array}{c}\Phi_1\\\Phi_2\end{array}\right)
\end{equation*}
with values in $E$ the charge operators act as
\begin{align*}
T_3&=\left(\begin{array}{cc}\frac{1}{2} & 0 \\ 0 &-\frac{1}{2}\end{array}\right)\\
Y&=\left(\begin{array}{cc}1 & 0 \\0 & 1\end{array}\right)\\
Q&=\left(\begin{array}{cc}1 & 0 \\0 & 0\end{array}\right).
\end{align*}
We can directly read off the charges.

\section{Yukawa couplings}\label{sect Yukawa}
Using Yukawa couplings we can generate masses for chiral fermions via the Higgs field in a gauge invariant way. We consider two definitions:
\begin{defn}
Suppose that $V_A,V_B,V_C$ are unitary representation spaces of a compact Lie group $G$. Then we define a {\bf multiplet triple product} as a $\mathbb{C}$-linear map
\begin{equation*}
\tau\colon V_A\otimes V_B\otimes V_C\longrightarrow\mathbb{C}
\end{equation*}
which is invariant under the action of $G$. Suppose similarly that $S_A,S_B,S_C$ are representations of the Lorentz spin group (for example, scalars or spinors). Then we define a {\bf Lorentz triple product} as a $\mathbb{C}$-linear map
\begin{equation*}
\mu\colon S_A\otimes S_B\otimes S_C\longrightarrow\mathbb{C}
\end{equation*}
which is invariant under Lorentz transformations, i.e.~a scalar.
\end{defn}
We then have:
\begin{prop}\label{prop yukawa coupling}Suppose $\tau$ is a multiplet triple product and $\mu$ is a Lorentz triple product. Then
\begin{equation*}
\mathcal{L}=g_Y\mathrm{Re}(\mu\otimes\tau)\colon (S_A\otimes V_A)\otimes (S_B\otimes V_B)\otimes (S_C\otimes V_C)\longrightarrow\mathbb{R}
\end{equation*}
is for every constant $g_Y\in\mathbb{R}$ a $G$-invariant scalar. It defines a gauge invariant, cubic Lagrangian called {\bf Yukawa coupling} (the constant $g_Y$ is also called Yukawa coupling). 
\end{prop}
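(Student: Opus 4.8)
The plan is to reduce the statement to the separate invariances of $\mu$ and $\tau$ by reorganizing the tensor factors so that the Lorentz data and the gauge data are cleanly separated. First I would fix the canonical ``shuffle'' isomorphism
\[
P\colon (S_A\otimes V_A)\otimes(S_B\otimes V_B)\otimes(S_C\otimes V_C)\longrightarrow (S_A\otimes S_B\otimes S_C)\otimes(V_A\otimes V_B\otimes V_C),
\]
which on decomposable tensors sends $(s_A\otimes v_A)\otimes(s_B\otimes v_B)\otimes(s_C\otimes v_C)$ to $(s_A\otimes s_B\otimes s_C)\otimes(v_A\otimes v_B\otimes v_C)$. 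With this identification the map $\mu\otimes\tau$ appearing in the statement is really $(\mu\otimes\tau)\circ P$, where $\mu\otimes\tau$ lands in $\mathbb{C}\otimes\mathbb{C}\cong\mathbb{C}$.

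Second I would spell out how the two symmetry groups act. The gauge group $G$ acts on each factor $S_\bullet\otimes V_\bullet$ trivially on the spinor/scalar part $S_\bullet$ and through the given unitary representation on $V_\bullet$, hence on the triple tensor product by the tensor-product representation. Under $P$ this corresponds to acting trivially on $S_A\otimes S_B\otimes S_C$ and by the tensor-product representation on $V_A\otimes V_B\otimes V_C$, so $P$ is $G$-equivariant. Symmetrically, the Lorentz spin group acts only on the $S_\bullet$-factors and trivially on the $V_\bullet$-factors, and $P$ is equivariant for it as well. The crucial structural observation is that these two actions occupy disjoint tensor slots and therefore commute, so invariance may be checked for each group separately.

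Third, invariance is then immediate from the hypotheses: for $g\in G$ the $\tau$-factor is unchanged because $\tau$ is a $G$-invariant form, while the $\mu$-factor is untouched because $G$ acts trivially on the spinor slots; thus $(\mu\otimes\tau)\circ P$ returns the same complex value before and after applying $g$. The same argument with the roles reversed gives Lorentz invariance, so $(\mu\otimes\tau)\circ P$ is a Lorentz scalar. Finally I would observe that passing to the real part and multiplying by the real constant $g_Y$ preserves this: although $\mathrm{Re}$ is only $\mathbb{R}$-linear, it is applied to a complex number that is literally unchanged under each group element, so its real part is unchanged too. This shows that $\mathcal{L}=g_Y\,\mathrm{Re}\bigl((\mu\otimes\tau)\circ P\bigr)$ is a real-valued, $G$- and Lorentz-invariant trilinear form, i.e.\ a gauge invariant cubic Lagrangian.

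I do not anticipate a genuine obstacle; the only point requiring care is the bookkeeping, namely verifying that the shuffle $P$ is equivariant for both actions and that, since these actions sit in disjoint tensor factors, one may check $G$-invariance and Lorentz invariance independently rather than for the combined group. The remark about $\mathrm{Re}$ is the one place where one must resist the reflex to invoke $\mathbb{C}$-linearity: the correct and sufficient statement is simply that invariance of a fixed complex value descends to its real part.
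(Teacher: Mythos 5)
Your argument is correct; note that the paper states this proposition without any proof at all, so your write-up simply supplies the routine verification the author omits, and it is the natural one: the $G$-action and the Lorentz action live in disjoint tensor slots, so invariance of $(\mu\otimes\tau)\circ P$ reduces to the separate invariances of $\tau$ and $\mu$, and passing to $g_Y\,\mathrm{Re}(\cdot)$ preserves invariance because the complex value itself is fixed. The only point worth adding is why pointwise $G$-invariance already gives invariance under \emph{local} gauge transformations $\sigma\colon M\to G$: the Yukawa term contains no derivatives of the fields, so it is evaluated pointwise and never sees $\partial_\mu\sigma$, unlike the kinetic terms that forced the introduction of the gauge field in Lemma \ref{lem gauge transform gauge field}.
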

As a non-quadratic term the Yukawa Lagrangian describes an interaction between certain fields. We want to consider this notion in the case of the electroweak interaction of leptons.
\subsection{Action of the gauge group}
Let 
\begin{align*}
V_A&=V_B=\mathbb{C}^2\\
V_C&=\mathbb{C}
\end{align*}
and $G=SU(2)\times U(1)$. We consider the following representations of $G$ on these three complex vector spaces, defined on the generators $\beta_i$ by:
\begin{equation*}
\beta_l\cdot (v_A,v_B,v_C)=\left(g\frac{i\sigma_l}{2}v_A,g\frac{i\sigma_l}{2}v_B,0\right)
\end{equation*}
for $l=1,2,3$, and
\begin{equation*}
\beta_4\cdot (v_A,v_B,v_C)=\left(-g'\frac{i}{2}v_A, +g'\frac{i}{2}v_B, -g'iv_C\right).
\end{equation*}
Here $g$ and $g'$ are the coupling constants as before. The action of $SU(2)$ on $V_A$ and $V_B$ is the standard representation and on $V_C$ the trivial representation. 

\subsection{Triple products}
We define $\overline{V}_A$ as the complex conjugate vector space of $V_A$. It consists of the elements 
\begin{equation*}
\bar{v}_A=v^\dagger_A
\end{equation*}
for $v_A\in V_A$ and has complex structure defined by
\begin{equation*}
i\bar{v}_A=(-iv_A)^\dagger.
\end{equation*}
We have an induced action of $G$ on $\overline{V}_A$.
\begin{prop}\label{prop multiplet triple electroweak}The linear map
\begin{align*}
\tau\colon \overline{V}_A\otimes V_B\otimes V_C&\longrightarrow\mathbb{C}\\
(\bar{v}_A,v_B,v_C)&\longmapsto \bar{v}_Av_Bv_C
\end{align*}
is invariant under the action of $G$ and defines a multiplet triple product.
\end{prop}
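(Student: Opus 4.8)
The plan is to verify invariance of $\tau$ by reducing it to an infinitesimal check. Since $G=SU(2)\times U(1)$ is connected, a linear functional on a finite-dimensional representation is $G$-invariant precisely when it is annihilated by the induced $\mathfrak{g}$-action, and the $\mathfrak{g}$-action on the tensor product $\overline{V}_A\otimes V_B\otimes V_C$ is given by the Leibniz rule. Hence it suffices to show, for each generator $\beta_l$ ($l=1,\ldots,4$) and all vectors, that
\begin{equation*}
\tau(\beta_l\cdot\bar v_A,v_B,v_C)+\tau(\bar v_A,\beta_l\cdot v_B,v_C)+\tau(\bar v_A,v_B,\beta_l\cdot v_C)=0.
\end{equation*}
Along the way I would also note that the complex structure on $\overline{V}_A$ specified in the text agrees with ordinary scalar multiplication of the row vectors $\bar v_A=v_A^\dagger$, so that $\tau(\bar v_A,v_B,v_C)=\bar v_A v_B\,v_C$ (row vector times column vector times scalar) is genuinely $\mathbb{C}$-linear in each slot, which is needed for $\tau$ to be a multiplet triple product and not merely a real-linear invariant.

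The first real step is to pin down the induced action on $\overline{V}_A$. If $g$ acts on $V_A$ by the unitary matrix $U$, then $g\cdot\bar v_A=(Uv_A)^\dagger=\bar v_A U^\dagger=\bar v_A U^{-1}$, and differentiating gives $\beta_l\cdot\bar v_A=-\bar v_A X_l$, where $X_l$ is the matrix by which $\beta_l$ acts on $V_A$, namely $g\tfrac{i\sigma_l}{2}$ for $l=1,2,3$ and $-g'\tfrac{i}{2}$ for $l=4$. The \emph{sign reversal} produced by passing to the conjugate representation (and the accompanying sign flip of the hypercharge) is the crucial structural input; everything else is bookkeeping.

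With the action in hand the two cases are short. For $l=1,2,3$ the $V_C$-term vanishes because $SU(2)$ acts trivially on $V_C$, while the $\overline{V}_A$- and $V_B$-contributions are $-g\tfrac{i}{2}\,\bar v_A\sigma_l v_B\,v_C$ and $+g\tfrac{i}{2}\,\bar v_A\sigma_l v_B\,v_C$, which cancel; this is exactly the statement that the conjugate of the standard representation pairs $G$-equivariantly with the standard representation. For $l=4$ all three terms survive: they contribute $g'\tfrac{i}{2}$, $g'\tfrac{i}{2}$, and $-g'i$ times $\bar v_A v_B\,v_C$, respectively, which sum to zero. This last cancellation is precisely the condition that the three weak hypercharges add up to zero, which is how the representations on $V_A,V_B,V_C$ were arranged. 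Having checked annihilation by every $\beta_l$, I conclude $\tau$ is $\mathfrak{g}$-invariant, hence $G$-invariant by connectedness, and so defines a multiplet triple product. The only place demanding care—and the main obstacle to a clean write-up—is the sign bookkeeping in the induced action on the conjugate space $\overline{V}_A$; once that inverse is correctly tracked, the $SU(2)$-invariance is automatic from unitarity and the $U(1)$-invariance is the hypercharge sum $\tfrac12+\tfrac12-1=0$.
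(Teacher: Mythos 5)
Your proof is correct and takes essentially the same route as the paper: the paper likewise reduces $G$-invariance (by connectedness) to the vanishing of the infinitesimal Leibniz-rule sum $\tau(\beta_i\cdot\bar v_A,v_B,v_C)+\tau(\bar v_A,\beta_i\cdot v_B,v_C)+\tau(\bar v_A,v_B,\beta_i\cdot v_C)$ for $i=1,\dots,4$, which it declares ``easy to check.'' You simply carry out that check explicitly, with the correct sign reversal on the conjugate representation $\overline{V}_A$ and the hypercharge cancellation $\tfrac12+\tfrac12-1=0$, so there is nothing to correct.
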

\begin{proof}
It is easy to check that
\begin{equation*}
\tau(\beta_i\cdot \bar{v}_A,v_B,v_C)+\tau(\bar{v}_A,\beta_i\cdot v_B,v_C)+\tau(\bar{v}_A,v_B,\beta_i\cdot v_C)\equiv 0
\end{equation*}
for all $i=1,2,3,4$. This implies the claim.
\end{proof}
We let 
\begin{align*}
S_A&=S_L\quad\text{(the space of left-handed Weyl spinors)}\\
S_B&=\mathbb{C}\\
S_C&=S_R\quad \text{(the space of right-handed Weyl spinors).}
\end{align*}
We define $\overline{S}_A$ as the conjugate space of left-handed Weyl spinors. It consists of the spinors $\overline{\psi}_L=\psi_L^\dagger$ for $\psi_L\in S_A$ and has complex structure defined by
\begin{equation*}
i\overline{\psi}_L=\overline{(-i\psi_L)}.
\end{equation*}
\begin{prop}\label{prop lorentz triple electroweak}The linear map
\begin{align*}
\mu\colon \overline{S}_A\otimes S_B\otimes S_C&\longrightarrow\mathbb{C}\\
(\bar{\psi}_A,\phi_B,\psi_C)&\longmapsto \phi_B\bar{\psi}_A\psi_C
\end{align*}
defines a Lorentz triple product.
\end{prop}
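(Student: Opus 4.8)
The plan is to verify the two defining requirements of a Lorentz triple product separately: that $\mu$ is a well-defined $\mathbb{C}$-linear map on the tensor product $\overline{S}_A\otimes S_B\otimes S_C$, and that it is invariant under the Lorentz spin group, i.e.~a scalar.

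First I would check $\mathbb{C}$-linearity in each slot. Linearity in $\phi_B\in S_B=\mathbb{C}$ and in $\psi_C\in S_C=S_R$ is immediate, since $\mu$ is, up to the scalar $\phi_B$, just the matrix product $\overline{\psi}_A\psi_C=\psi_A^\dagger\psi_C$. The only point requiring care is linearity in the first slot: the assignment $\psi_L\mapsto\psi_L^\dagger$ is antilinear, so linearity holds precisely because $\overline{S}_A$ is equipped with the conjugate complex structure $i\,\overline{\psi}_L=\overline{(-i\psi_L)}$. Unwinding this, $i\,\overline{\psi}_A$ corresponds to the row vector $i\psi_A^\dagger$, whence $\mu(i\,\overline{\psi}_A,\phi_B,\psi_C)=\phi_B(i\psi_A^\dagger)\psi_C=i\,\mu(\overline{\psi}_A,\phi_B,\psi_C)$, so $\mu$ descends to a genuine $\mathbb{C}$-linear map on the tensor product.

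For Lorentz invariance I would use the explicit transformation behaviour of Weyl spinors under the spin cover $SL(2,\mathbb{C})$ of the restricted Lorentz group: a left-handed spinor transforms as $\psi_L\mapsto\Lambda\psi_L$ while a right-handed spinor transforms as $\psi_R\mapsto(\Lambda^\dagger)^{-1}\psi_R$, the two representations being related precisely by Hermitian-conjugate-inverse. Since $S_B$ carries the trivial representation, $\phi_B$ is unchanged, and the conjugate spinor transforms as $\overline{\psi}_A\mapsto\overline{\Lambda\psi_A}=(\Lambda\psi_A)^\dagger=\psi_A^\dagger\Lambda^\dagger$. Substituting,
\begin{equation*}
\mu\longmapsto\phi_B\,\psi_A^\dagger\Lambda^\dagger(\Lambda^\dagger)^{-1}\psi_C=\phi_B\,\psi_A^\dagger\psi_C=\mu,
\end{equation*}
so the $\Lambda$-dependence cancels and $\mu$ is invariant. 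Equivalently, one may simply observe that $\overline{\psi}_A\psi_C=\psi_L^\dagger\psi_R$ is exactly the chirality-mixing combination $\overline{\Psi}_L\Psi_R$ already identified as a Lorentz scalar in the discussion of the Dirac mass term in Section~\ref{sect massive boson chiral fermion}, and that multiplying a scalar by the Lorentz-invariant $\phi_B$ again yields a scalar.

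The main obstacle is bookkeeping rather than conceptual: one must fix the transformation conventions for $\psi_L$ and $\psi_R$ so that the relation $\Lambda_R=(\Lambda^\dagger)^{-1}$ holds, and one must track the conjugate complex structure on $\overline{S}_A$ carefully to confirm that the would-be antilinearity of $\psi\mapsto\psi^\dagger$ is exactly what makes $\mu$ honestly $\mathbb{C}$-linear. Once these conventions are pinned down the cancellation is a one-line computation, and invoking the earlier identification of $\overline{\Psi}_L\Psi_R$ as a scalar makes even that unnecessary.
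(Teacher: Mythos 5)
Your proposal is correct and ultimately rests on the same key fact as the paper's one-line proof, namely that $\overline{\psi}_A\psi_C=\psi_L^\dagger\psi_R$ is a Lorentz scalar; the paper simply cites this (it was noted in the discussion of the Dirac mass term), whereas you additionally verify it explicitly via the $SL(2,\mathbb{C})$ transformation laws $\psi_L\mapsto\Lambda\psi_L$, $\psi_R\mapsto(\Lambda^\dagger)^{-1}\psi_R$ and also check $\mathbb{C}$-linearity in the conjugate slot. Both of these extra verifications are sound and merely make explicit what the paper leaves implicit.
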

\begin{proof}
The proof follows because $\bar{\psi}_A\psi_C$ is a Lorentz scalar.
\end{proof}

\subsection{Yukawa coupling in the electroweak theory}
We set
\begin{align*}
E_L&=\left(\begin{array}{c}E^1_L\\ E^2_L\end{array}\right)=\left(\begin{array}{c}\nu_{eL}\\ e_L\end{array}\right)\in S_A\otimes V_A\\
\Phi&=\left(\begin{array}{c}\Phi_1\\\Phi_2\end{array}\right)\in S_B\otimes V_B\\
e_R&\in S_C\otimes V_C.
\end{align*}
Here $e_L$ and $e_R$ correspond to the left-handed and right-handed {\bf electron} (which are apriori unrelated fields; see below for an explanation of the common name "{}electron"{}) and $\nu_{eL}$ is the left-handed {\bf electron neutrino}. There are another two analogous generations in the Standard Model: the muon with the muon neutrino and the tau with the tau neutrino. The field $\Phi$ is the Higgs field. 

In the Standard Model all right handed fermions, including the quarks, are singlets under the weak isospin group $SU(2)$ and transform in the trivial representation, while the left handed fermions form doublets (the three generations mentioned above as well as three generations of quark doublets) that transform in the standard representation of $SU(2)$. Left-handed and right-handed fermions therefore also transform in different representations of the weak hypercharge group $U(1)$, because each pair has the same electric charge. So far only left-handed neutrinos have been observed \cite{Drewes}. The electroweak interaction is thus a gauge theory with chiral matter.

Combining Propositions \ref{prop yukawa coupling}, \ref{prop multiplet triple electroweak} and \ref{prop lorentz triple electroweak} we have
\begin{cor}The term
\begin{align*}
\mathcal{L}&=2 g_Y\mathrm{Re}(\overline{E}_L\Phi e_R)\\
&=2g_Y\mathrm{Re}(\Phi_1\overline{E}^1_Le_R+\Phi_2\overline{E}^2_Le_R),
\end{align*}
for a constant $g_Y>0$, is a gauge invariant Lorentz scalar and defines a Yukawa coupling.
\end{cor}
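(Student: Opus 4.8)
The plan is to assemble the three earlier results into a single evaluation, since the Corollary is really an instance of Proposition \ref{prop yukawa coupling} applied to the specific triple products constructed in Propositions \ref{prop multiplet triple electroweak} and \ref{prop lorentz triple electroweak}. First I would fix the bookkeeping of the representation spaces: with $V_A = V_B = \mathbb{C}^2$, $V_C = \mathbb{C}$ and $S_A = S_L$, $S_B = \mathbb{C}$, $S_C = S_R$, the physical fields sit as $E_L \in S_A \otimes V_A$, $\Phi \in S_B \otimes V_B$ and $e_R \in S_C \otimes V_C$. The crucial observation is that the triple products $\tau$ and $\mu$ from Propositions \ref{prop multiplet triple electroweak} and \ref{prop lorentz triple electroweak} are defined not on $V_A$ and $S_A$ themselves but on the conjugate spaces $\overline{V}_A$ and $\overline{S}_A$; accordingly I would apply Proposition \ref{prop yukawa coupling} with the role of its first multiplet and Lorentz factors played by $\overline{V}_A$ and $\overline{S}_A$. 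This is legitimate because Proposition \ref{prop yukawa coupling} is stated for arbitrary unitary representation spaces and the conjugate of a unitary representation is again unitary.

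With that relabeling, Proposition \ref{prop yukawa coupling} immediately yields that $\mathrm{Re}(\mu \otimes \tau)$ is a $G$-invariant and Lorentz-invariant real scalar, hence a gauge-invariant cubic Lagrangian, i.e. a Yukawa coupling, for any real constant. It remains only to identify the abstract scalar $(\mu \otimes \tau)(\overline{E}_L, \Phi, e_R)$ with the explicit component expression. Here I would recall that $\mu \otimes \tau$ acts by contracting the Lorentz slots through $\mu$ and the multiplet slots through $\tau$ simultaneously. Since $\tau(\bar v_A, v_B, v_C) = v_A^\dagger v_B \, v_C = \sum_{a} (v_A^a)^* v_B^a v_C$ contracts the two $SU(2)$ doublet indices, and $\mu(\bar\psi_A, \phi_B, \psi_C) = \phi_B \, \bar\psi_A \psi_C$ pairs the conjugate left-handed spinor with the right-handed one into a Lorentz scalar, the combined evaluation on $\overline{E}_L = (\overline{E}^1_L, \overline{E}^2_L)$, $\Phi = (\Phi_1, \Phi_2)$ and $e_R$ collapses to $\Phi_1 \overline{E}^1_L e_R + \Phi_2 \overline{E}^2_L e_R = \overline{E}_L \Phi \, e_R$. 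Taking the real part and absorbing the normalization into the Yukawa constant produces exactly $2 g_Y \mathrm{Re}(\overline{E}_L \Phi e_R)$, the factor of $2$ being merely a convention for the coupling strength.

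The content of the statement is therefore entirely in the invariance, which is already delivered by the three cited results; the remaining work is purely combinatorial index-matching. The one place I would be careful is the consistency of the $U(1)$ hypercharges under conjugation: passing from $V_A$ to $\overline{V}_A$ reverses the sign of the action of $\beta_4$, and I would check that the hypercharge contributions from $\overline{V}_A$, $V_B$ and $V_C$ cancel so that $\tau$ is genuinely $G$-invariant. This cancellation is exactly what Proposition \ref{prop multiplet triple electroweak} asserts (vanishing of the infinitesimal action for $i=4$), so no new computation is needed; the subtlety is only in tracking the conjugation in the first slot correctly when the three propositions are combined. I expect this hypercharge-matching, rather than any analytic estimate, to be the sole potential pitfall.
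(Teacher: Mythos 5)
Your proposal is correct and follows exactly the route the paper takes: the corollary is stated as an immediate consequence of combining Propositions \ref{prop yukawa coupling}, \ref{prop multiplet triple electroweak} and \ref{prop lorentz triple electroweak}, which is precisely your assembly of the two triple products into the general Yukawa construction. Your additional care about the conjugate spaces $\overline{V}_A$, $\overline{S}_A$ and the hypercharge cancellation only makes explicit what the cited propositions already guarantee, so there is no gap and no genuinely different method.
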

Here we introduced a factor of $2$ for convenience. This is the classic Yukawa coupling that appears in the electroweak gauge theory of leptons. If we use the expansion
\begin{equation*}
\Phi=\left(\begin{array}{c} 0 \\ ||v_0|| \end{array}\right)+\Delta\Phi
\end{equation*}
for the Higgs field and keep only terms up to second order in the fields (without interactions), we get
\begin{align*}
\mathcal{L}&\approx 2||v_0||g_Y\mathrm{Re}(\overline{e}_Le_R)\\
&=||v_0||g_Y(\overline{e}_Le_R+\overline{e}_Re_L).
\end{align*}
This is precisely the mass term for a Dirac electron 
\begin{equation*}
e=\left(\begin{array}{c}e_L\\ e_R\end{array}\right)
\end{equation*}
of mass $m_e=||v_0||g_Y$. The previously unrelated Weyl spinors $e_L$ and $e_R$ thus combine after symmetry breaking into one Dirac spinor $e$ (see \cite{Robinson} and \cite{Srednicki}). We also see that the neutrino remains massless. The determination of the electron mass (and similarly the mass of other fermions, like the quarks) is thus reduced to the determination of the parameter $g_Y$ in the Yukawa coupling and the norm $||v_0||$ of the vacuum value of the Higgs field.

\subsection{Summary}
In the Standard Model we have the following types of interactions (terms in the Lagrangian of order three or higher in the fields):
\begin{itemize}
\item Interactions between the gauge bosons and the fermions and between the gauge bosons and the Higgs field (via the covariant derivative)
\item Interactions between gauge bosons (only in non-abelian gauge theories)
\item Self-interactions of the Higgs field (via the Higgs potential)
\item Interactions between the fermions and the Higgs field (via Yukawa couplings)
\end{itemize}
The famous 18 parameters of the classical Standard Model (with massless neutrinos), that so far have to be determined by experiments, are therefore:
\begin{itemize}
\item Two coupling constants of the electroweak interaction (or one coupling constant and the Weinberg angle) and the coupling constant of the strong interaction.
\item The Yukawa couplings between the Higgs field and the nine massive fermions (three leptons, six quarks).
\item The parameters $\lambda$ and $\mu$ of the Higgs field (or the mass of the Higgs boson and the vacuum value of the Higgs field).
\item There are another four parameters coming from the CKM matrix which is related to the weak interaction of quarks.
\end{itemize}

\bigskip
\bigskip

\end{document}